\documentclass[11pt,a4paper]{article}
\usepackage{amsmath}
\usepackage{amssymb}
\usepackage{mathrsfs}
\usepackage{hyperref}
\numberwithin{equation}{section}

\setlength{\baselineskip}{10pt} \topmargin 0pt \headheight 0pt
\headsep 0pt \oddsidemargin=0in
\evensidemargin=\oddsidemargin\textwidth=142mm \textwidth=160mm
\textheight=230mm
\newtheorem{theorem}{Theorem}[section]
\newtheorem{lemma}{Lemma}[section]

\newenvironment{proof}[1][Proof]{\begin{trivlist}
\item[\hskip \labelsep {\bfseries #1}]}{\end{trivlist}}

\begin{document}
\title {Norm of the Hilbert matrix operator between some spaces of analytic functions
\footnote{ The research was supported by  National Natural Science Foundation of China (Grant No. 11671357) and Zhejiang Provincial Natural Science Foundation of China (Grant No. LY23A010003)}}
\author{  Hao Hu\footnote{E-mail address:2716007899@qq.com} \quad\quad Shanli Ye\footnote{Corresponding author.~ E-mail address: slye@zust.edu.cn}   \\
(\small \it School of Science, Zhejiang University of Science and Technology, Hangzhou 310023, China$)$}

\date{}

\maketitle
\begin{abstract}
	In this paper, we calculate the exact value of  the norm of the Hilbert matrix operator $\mathcal{H}$ from the logarithmically weighted Korenblum space $H^\infty_{\alpha,\log}$ into Korenblum space $H^\infty_\alpha$, and from the Hardy space $H^\infty$ to the classical Bloch space $\mathcal{B}$.   Furthermore, we compute the precise value  of  the norm  on  the logarithmically weighted Korenblum space $H^\infty_{\alpha,\log}$, and obtain both the lower and upper bounds of the norm  on $\alpha$-Bloch space $\mathcal{B}^{\alpha}$.  Finally, in the context of mapping from the Korenblum space $H^\infty_\alpha$ to the $(\alpha+1)$-Bloch space $\mathcal{B}^{\alpha+1}$, we establish the norm of $\mathcal{H}$. \\
	{\small\bf Keywords}\quad {Operator norm, Hilbert matrix,  Korenblum spaces, Bloch spaces
		\\
		{\small\bf 2020 MR Subject Classification }\quad 47B38, 47B91, 30H05, 30H30, 30H99\\}

\end{abstract}
\section{Introduction}\label{s1}

\quad The Hilbert matrix operator $\mathcal{H}$ is one of the central operators in operator theory.  In recent years, the study of its boundedness, norm, and other properties on several analytic function spaces has been actively investigated. The exploration of its boundedness and norm across various analytic function spaces has been a vibrant area of research in recent years \cite{ATE,11,BNO,BTM,5,4,2,1}. Initially, Diamantopoulus and Siskakis \cite{1}  explored the boundedness of operator
 $\mathcal{H}$  on the Hardy space $H^p(1<p<\infty)$, and simultaneously derived an upper bound estimate for its norm. In a subsequent study\cite{2}, Diamantopoulus  extended the investigation to the Bergman spaces $A^p(2<p<\infty)$,  and obtained the upper bound estimate for the norm of $\mathcal{H}$. Building upon this foundational work, Dostani\'{c}, Jevti\'{c}, Vukoti\'{c} \cite{16} obtained the precise norm value of operator $\mathcal{H}$ within the Hardy space $H^p(1<p<\infty)$ and also gave exact value of the norm  of $\mathcal{H}$ in the Bergman space $A^p$ when $4\leq p<\infty$. In 2017, Bo\u{z}in and Karapetrovi\'{c}\cite{BNO} solved the problem of exact value of the norm  of $\mathcal{H}$ in the Bergman space $A^p$ for $2<p<4$.

 The study of boundedness of $\mathcal{H}$  on  the weighted Bergman spaces $A^p_\alpha$ was initiated in  \cite{7}, and some partial results were obtained. Subsequently, the norms on the weighted Bergman spaces $A^p_\alpha$ with different values of $\alpha$  have been investigated by numerous researchers. For details, we refer to \cite{11,7, 8,9,10}. In \cite{13}, the second author and Feng studied the operator $\mathcal{H}$ on the logarithmically weighted Bergman space $A_{\log^\alpha}^p$  and calculated lower bound and  upper bound for the norm of $\mathcal{H}$ from the logarithmically weighted Bergman space $A_{\log^\alpha}^2$ into Bergman space $A^2$ when $\alpha>2$.

  On the Korenblum space  $H^\infty_\alpha$,  Lindstr\"{o}m, Miihkinen and Wikman in \cite{3} calculated the precise value of the norm of $\mathcal{H}$ on  $H^\infty_\alpha$ when $0<\alpha\leq\frac{2}{3}$, and got the upper bound for the norm of $\mathcal{H}$ on  $H^\infty_\alpha$ for $\frac{2}{3}<\alpha<1$. In \cite{4}, Dai obtained  the precise value of the norm  on  $H^\infty_\alpha$ for $0<\alpha\leq\alpha_0$, where $\alpha_0> \frac{2}{3}$, and gave a better estimate of the upper bound of the norm of $\mathcal{H}$. In \cite{1}, Diamantopoulus and Siskakis discovered that the Hilbert matrix operator $\mathcal{H}$  is not bounded on the Hardy space  $H^\infty$. From \cite{6,3}, we know that  it is bounded operator from   $H^\infty$ into the classical Bloch space $\mathcal{B}$, and bounded on  the Korenblum space $H^\infty_\alpha$ for $0<\alpha<1$.

  In this article, we present the norm of $\mathcal{H}$  between certain spaces of analytic functions. Our paper is organized as follows.
  In Section 2, we introduce the notion of the logarithmically weighted Korenblum space and some notations.
  In Section 3,  when $0<\alpha<1$, we calculate the exact value of the norm from the logarithmically weighted Korenblum space  $H^\infty_{\alpha,\log}$ to the Korenblum space $H^\infty_\alpha$. This value is represented by the supremum of an integral, which makes it hard to determine its magnitude. Thus, we also offer its upper and lower bounds.
 In Section 4, we calculate the exact value of  the norm  on  the logarithmically weighted Korenblum space $H^\infty_{\alpha,\log}$. In Section 5,  when $1<\alpha< 2$ , we obtain both the lower and upper bounds of  the norm  on $\alpha$-Bloch space $\mathcal{B}^{\alpha}$. Additionally we show that it is not bounded on $\mathcal{B}^{\alpha}$ for $\alpha\geq 2$, or $0<\alpha\leq 1$. In Section 6, it is found that the norm of $\mathcal{H}$ from the Hardy space $H^\infty$ to the classical Bloch space $\mathcal{B}$ is equal to 3. In Section 7, for $0<\alpha\leq \frac 23$, we obtain  the exact value of  the norm of  $\mathcal{H}$ from the  Korenblum space $H^\infty_\alpha$ to $(\alpha+1)$-Bloch space $\mathcal{B}^{\alpha+1}$. For $\frac 23<\alpha< 1$, we also  give both the lower and upper bounds. Moreover, we prove that  $\mathcal{H}: H^\infty_\alpha\to \mathcal{B}^{\alpha+1}$ is not bounded when $\alpha\geq 1$.

\section{Preliminaries}

Let $\mathbb{D}$ denote the open unit disk of the complex plane $\mathbb{C}$, and let $H(\mathbb{D})$ denote the set of all analytic functions in  $\mathbb{D}$.

For $0 < p \leq \infty $, the Hardy space $H^p$ is the space comprising all functions  $f \in H(\mathbb{D} )$ such that
$$\|f\|_{H^p}=\sup_{0\leq r <1} M_p(r,f)<\infty,$$
where
$$M_p(r,f)=\left( \frac{1}{2\pi}\int_0^{2\pi} |f(re^{it})|^p dt\right)^{\frac{1}{p}}, \quad 0<p<\infty;$$
$$M_\infty(r,f)=\sup_{0\leq t<2\pi}|f(re^{it})|.$$

We refer to \cite{Dur} for the terminology and findings on Hardy spaces.

For $0 <\alpha<1 $, the Korenblum space \cite{3} $H^\infty_\alpha$ is the space of all functions $f\in H(\mathbb{D} )$ with
$$\|f\|_{H^\infty_\alpha}=\sup_{z\in\mathbb{D}}(1-|z|^2)^\alpha|f(z)|<\infty.$$

Now  we define   the logarithmically weighted Korenblum spaces $H^\infty_{\alpha,\log}$, which consists of those $f \in H(\mathbb{D})$ such that
$$\|f\|_{H^\infty_{\alpha,\log}}\overset{def}{=} \sup_{z\in\mathbb{D}}(1-|z|^2)^\alpha\log\frac{2e^\frac{1}{\alpha}}{1-|z|^2}|f(z)|<\infty,  $$
It is easily obtained that $H^\infty \subsetneqq H^\infty_{\alpha,\log} \subsetneqq H^\infty_\alpha$.

For $0<\alpha<\infty$, the $\alpha$-Bloch space $\mathcal{B}^{\alpha}$  consists of those functions $f\in H(\mathbb{D})$ with
$$\|f\|_{\mathcal{B}^\alpha}= |f(0)|+\sup_{z\in\mathbb{D}}(1-|z|^2)^{\alpha}|f'(z)|<\infty.$$

We can see that $\mathcal{B}^1$ is the classical Bloch space $\mathcal{B}$.  We mention \cite{pommerenke,Zhu} as general references for the classical Bloch space and  the $\alpha$-Bloch spaces.

The Hilbert matrix is an infinite matrix $\mathcal{H}$ whose entries are $a_{n,k} = \frac{1}{n+k+1} $, $n,k \geq 0$. The Hilbert matrix $\mathcal{H}$ can be also viewed as an operator on spaces of analytic functions by its action on their Taylor coefficients. Hence for those $f \in H(\mathbb{D})$, $f(z) = \sum_{k=0}^\infty a_kz^k$, then we define a transformation $\mathcal{H}$ by
\begin{align}
	\mathcal{H}f(z)=\sum_{n=0}^\infty \left( \sum_{k=0}^\infty \frac{a_k}{n+k+1} \right)z^n,\notag
\end{align}
whenever the right hand side makes sense and  defines an analytic function in $\mathbb{D}$.
\section{Norm  of the Hilbert matrix $\|\mathcal{H}\|_{H^\infty_{\alpha,\log} \rightarrow H^\infty_\alpha}$}

For $0<\alpha<1$, since $H^\infty_{\alpha,\log} \subsetneqq H^\infty_\alpha$ and the norm of the Hilbert matrix operator $\mathcal{H}$  on the Korenblum space $H^\infty_\alpha$ is bounded within this range, we can conclude that $\mathcal{H}$ is a bounded operator  from $H^\infty_{\alpha,\log}$ into $H^\infty_\alpha$. In this section,
our aim is to derive norm estimates for $\mathcal{H}$ as it acts from $H^\infty_{\alpha,\log}$ into $H^\infty_\alpha$ for $0 <\alpha<1$. We know from \cite{1} that $\mathcal{H}f(z)$ is a well-defined analytic function on the unit disk $\mathbb{D}$ for $ f\in H^\infty_\alpha$. Consequently, we can assert that
\begin{align} \label{eq2.1}
	\mathcal{H}f(z) &= \sum_{n=0}^\infty \left( \sum_{k=0}^\infty \frac{a_k}{n+k+1} \right)z^n  \notag\\
	&= \sum_{n=0}^\infty \left( \sum_{k=0}^\infty a_k \int_0^1 t^{n+k}dt \right)z^n
	\notag\\
	&= \int_0^1 \sum_{k=0}^\infty a_k  t^k \sum_{n=0}^\infty  t^n z^n dt \notag \\
	&= \int_0^1 \frac{f(t)}{1-tz} dt.
\end{align}
We know that the Hilbert matrix operator $\mathcal{H}$ has an integral representation in terms of
weighted composition operators $T_t$ (see \cite{1}):
\begin{align}\label{eq2.2}
	\mathcal{H} f(z)=\int_0^1 T_t f(z) dt,
\end{align}
where
\begin{align}
	T_tf(z)=w_t(z)f(\phi_t(z)),  \quad w_t(z)=\frac{1}{1-(1-t)z}, \quad \phi_t(z)=\frac{t}{1-(1-t)z}. \notag
\end{align}

\begin{theorem}\label{Th2.1}
	For $0<\alpha<1$, then
$$ \|\mathcal{H}\|_{H^\infty_{\alpha,\log} \rightarrow H^\infty_\alpha}=\sup_{0\leq r<1}\int_{0}^{1}\frac{(1+r)^\alpha\left((t-1)r+1\right)^{2\alpha-1}}{(1-t)^\alpha \left((t-1)r+1+t\right)^\alpha\log\frac{2e^\frac{1}{\alpha}}{1-\left(\frac{t}{1-(1-t)r}\right)^2}}dt,$$
and
$$\|\mathcal{H}\|_{H^\infty_{\alpha,\log} \rightarrow H^\infty_\alpha}\geq\int_{0}^{1}\frac{1}{(1-t^2)^\alpha\log\frac{2e^\frac{1}{\alpha}}{1-t^2}}dt
	.$$

\end{theorem}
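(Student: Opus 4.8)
The plan is to prove the equality by the usual two-sided sandwich, exploiting the integral representation $\mathcal{H}f(z)=\int_0^1 T_tf(z)\,dt$ from \eqref{eq2.2}, and to record at the outset that the integrand appearing in the statement is nothing but
$$F_t(z):=\frac{(1-|z|^2)^\alpha\,|w_t(z)|}{(1-|\phi_t(z)|^2)^\alpha\,\log\frac{2e^{1/\alpha}}{1-|\phi_t(z)|^2}}$$
evaluated at $z=r\in[0,1)$. Indeed, writing $s=1-(1-t)r=(t-1)r+1$, one checks $s-t=(1-r)(1-t)$ and $s+t=(t-1)r+1+t$, so $1-\phi_t(r)^2=\frac{(1-r)(1-t)(s+t)}{s^2}$, and $F_t(r)$ collapses exactly to the quotient under the integral sign. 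Thus it suffices to show that $\|\mathcal{H}\|_{H^\infty_{\alpha,\log}\to H^\infty_\alpha}$ both dominates and is dominated by $\sup_{0\le r<1}\int_0^1 F_t(r)\,dt$.

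For the upper bound I would fix $f$ with $\|f\|_{H^\infty_{\alpha,\log}}\le1$, put the modulus inside the integral in \eqref{eq2.2}, and use the defining pointwise estimate $|f(\phi_t(z))|\le\big[(1-|\phi_t(z)|^2)^\alpha\log\frac{2e^{1/\alpha}}{1-|\phi_t(z)|^2}\big]^{-1}$ to get $(1-|z|^2)^\alpha|\mathcal{H}f(z)|\le\int_0^1 F_t(z)\,dt$ for every $z\in\mathbb{D}$. The crux is then to reduce the supremum over $z\in\mathbb{D}$ to the positive real axis. Since $F_t$ depends on $z$ only through $\rho=|z|$ and $u:=|1-(1-t)z|$, and for fixed $\rho$ the quantity $u$ is minimized over the circle $|z|=\rho$ at $z=\rho$, it is enough to prove that $F_t$ is decreasing in $u$ on the range $u>t$. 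Differentiating $\log F_t$ in $u$ reduces this to the inequality $\tfrac{2t^2}{L}\le u^2+(2\alpha-1)t^2$, where $L=\log\frac{2e^{1/\alpha}u^2}{u^2-t^2}$; here the shape of the weight pays off, since $L>1/\alpha$ forces $\tfrac{2t^2}{L}<2\alpha t^2$, while $u\ge 1-(1-t)\rho>t$ gives $2\alpha t^2<u^2+(2\alpha-1)t^2$. This monotonicity is the main obstacle and the step that genuinely uses the logarithmic weight. Granting it, $\max_{|z|=\rho}F_t(z)=F_t(\rho)$ for each $t$, hence $\sup_{z}\int_0^1 F_t(z)\,dt=\sup_{r}\int_0^1 F_t(r)\,dt$, the desired upper bound.

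For the matching lower bound I would guess the extremal function
$$f_0(z)=\frac{1}{(1-z^2)^\alpha\,\log\frac{2e^{1/\alpha}}{1-z^2}},$$
which is analytic on $\mathbb{D}$ because $1-z^2$ lies in the half-plane $\{\mathrm{Re}\,w>0\}$ and $|1-z^2|<2<2e^{1/\alpha}$. The key verification is $\|f_0\|_{H^\infty_{\alpha,\log}}\le1$: using $|1-z^2|\ge 1-|z|^2$ (equivalent to $\mathrm{Re}\,z^2\le|z|^2$) together with the fact that $h(x):=x^\alpha\log\frac{2e^{1/\alpha}}{x}$ is increasing on $(0,2)$---its derivative vanishing precisely at $x=2$, again a feature of the constant $2e^{1/\alpha}$---one obtains, with $A=1-|z|^2$ and $B=1-z^2$, that $|B|^\alpha\big|\log\frac{2e^{1/\alpha}}{B}\big|\ge|B|^\alpha\log\frac{2e^{1/\alpha}}{|B|}=h(|B|)\ge h(A)$, which is exactly the required bound. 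Since $f_0>0$ on $[0,1)$ and both $w_t(r)$ and $\phi_t(r)$ are positive there, the integrand of $\mathcal{H}f_0(r)=\int_0^1 w_t(r)f_0(\phi_t(r))\,dt$ is positive, so no cancellation occurs and $(1-r^2)^\alpha\mathcal{H}f_0(r)=\int_0^1 F_t(r)\,dt$ identically in $r$. Taking the supremum over $r$ yields $\|\mathcal{H}\|\ge\|\mathcal{H}f_0\|_{H^\infty_\alpha}\ge\sup_{r}\int_0^1 F_t(r)\,dt$, which closes the sandwich and proves the equality.

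Finally, the concrete lower bound is just the value of the same computation at $r=0$: since $w_t(0)=1$ and $\phi_t(0)=t$, we have $\mathcal{H}f_0(0)=\int_0^1 f_0(t)\,dt=\int_0^1\frac{dt}{(1-t^2)^\alpha\log\frac{2e^{1/\alpha}}{1-t^2}}$, whence $\|\mathcal{H}\|\ge|\mathcal{H}f_0(0)|$ gives the second displayed inequality.
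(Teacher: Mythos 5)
Your proposal is correct and follows essentially the same route as the paper: the same extremal function $f_\alpha$, the same sandwich via the representation $\mathcal{H}f=\int_0^1 T_tf\,dt$, and the same reduction of the supremum to the positive radius. The only (cosmetic) differences are that you verify the monotonicity in $u=|1-(1-t)z|$ by differentiating $\log F_t$, whereas the paper gets it by combining $|w_t(z)|\le w_t(|z|)$, $|\phi_t(z)|\le\phi_t(|z|)$ with the monotonicity of $g(x)=x^\alpha\log\frac{2e^{1/\alpha}}{x}$, and you evaluate at $r=0$ instead of taking $r\to0$ for the final lower bound.
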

\begin{proof}
Let $0<\alpha<1$ and $z \in \mathbb{D}$. Define$$f_\alpha(z)=\frac{1}{(1-z^2)^\alpha\log\frac{2e^\frac{1}{\alpha}}{1-z^2}}.$$
By a simple calculation, we know that $g(x)=x^\alpha\log\frac{2e^\frac{1}{\alpha}}{x}$ is monotonically increasing in $(0,2)$. Since $0\leq |1-z^2|\leq2$, we obtained that
\begin{align*}
\|f_\alpha\|_{H^\infty_{\alpha,\log} }
&=\sup_{z\in\mathbb{D}}(1-|z|^2)^\alpha\log\frac{2e^\frac{1}{\alpha}}{1-|z|^2}\left|\frac{1}{(1-z^2)^\alpha\log\frac{2e^\frac{1}{\alpha}}{1-z^2}}\right|\\
&\leq\sup_{z\in\mathbb{D}}(1-|z|^2)^\alpha\log\frac{2e^\frac{1}{\alpha}}{1-|z|^2}\frac{1}{(1-|z|^2)^\alpha\log\frac{2e^\frac{1}{\alpha}}{1-|z|^2}}\\
&=\sup_{0\leq r<1}(1-r^2)^\alpha\log\frac{2e^\frac{1}{\alpha}}{1-r^2}\frac{1}{(1-r^2)^\alpha\log\frac{2e^\frac{1}{\alpha}}{1-r^2}}\\
&=1.
\end{align*}

Since $\lim\limits_{|z|\rightarrow1}|f_\alpha(z)|(1-|z|^2)^\alpha\log\frac{2e^\frac{1}{\alpha}}{1-|z|^2}=1$,  we obtain $\|f_\alpha\|_{H^\infty_{\alpha,\log} }=1.$

The weighted composition operator $T_t$ applied to a function $f_\alpha$ can be written as
\begin{align*}
	T_tf_\alpha(z)&=\frac{1}{1-(1-t)z}f_\alpha\left(\frac{t}{1-(1-t)z}\right) \\
	&=\frac{1}{1-(1-t)z}\frac{1}{\left(1-\left(\frac{t}{1-(1-t)z}\right)^2\right)^\alpha\log\frac{2e^\frac{1}{\alpha}}{1-\left(\frac{t}{1-(1-t)z}\right)^2}}.
\end{align*}
Since $\mathcal{H} f_\alpha(z)=\int_0^1 T_t f_\alpha(z) dt$ and $\|f_\alpha\|_{H^\infty_{\alpha,\log} }=1,$ we get that
\begin{align}\label{eq2.3}
    \|\mathcal{H}\|_{H^\infty_{\alpha,\log} \rightarrow H^\infty_\alpha}&\geq\|\mathcal{H}f_\alpha\|_{H^\infty_\alpha}=\sup_{z\in\mathbb{D}}(1-|z|^2)^\alpha|\mathcal{H}f_\alpha(z)|\notag\\
    &=\sup_{z\in\mathbb{D}}\int_{0}^{1}(1-|z|^2)^\alpha\frac{1}{1-(1-t)z}\frac{1}{\left(1-\left(\frac{t}{1-(1-t)z}\right)^2\right)^\alpha\log\frac{2e^\frac{1}{\alpha}}{1-\left(\frac{t}{1-(1-t)z}\right)^2}}dt\notag\\
    &\geq\sup_{0\leq r<1}\int_{0}^{1}(1-r^2)^\alpha\frac{1}{1-(1-t)r}\frac{1}{\left(1-\left(\frac{t}{1-(1-t)r}\right)^2\right)^\alpha\log\frac{2e^\frac{1}{\alpha}}{1-\left(\frac{t}{1-(1-t)r}\right)^2}}dt\notag\\
    &=\sup_{0\leq r<1}\int_{0}^{1}\frac{(1+r)^\alpha\left((t-1)r+1\right)^{2\alpha-1}}{(1-t)^\alpha \left((t-1)r+1+t\right)^\alpha\log\frac{2e^\frac{1}{\alpha}}{1-\left(\frac{t}{1-(1-t)r}\right)^2}}dt.
\end{align}
On the other hand, we have that

\begin{align}\label{eq2.4}
	|T_tf(z)|&=|w_t(z)f(\phi_t(z))|\notag\\
	&= \lvert w_t(z)\rvert\frac{1}{(1-|\phi_t(z)|^2)^\alpha\log\frac{2e^\frac{1}{\alpha}}{1-|\phi_t(z)|^2}}(1-|\phi_t(z)|^2)^\alpha\log\frac{2e^\frac{1}{\alpha}}{1-|\phi_t(z)|^2}|f(\phi_t(z))|\notag\\
	&\leq \lvert w_t(z)\rvert\frac{1}{(1-|\phi_t(z)|^2)^\alpha\log\frac{2e^\frac{1}{\alpha}}{1-|\phi_t(z)|^2}}\|f\|_{H^\infty_{\alpha,\log} }.
\end{align}
    Noting that $|w_t(z)|=\frac{1}{|1-(1-t)z|}\leq\frac{1}{1-(1-t)|z|}$, $|\phi_t(z)|=\frac{t}{|1-(1-t)z|}\leq\frac{t}{1-(1-t)|z|}$ and the monotone property of $g$, we get $$|T_tf(z)|\leq w_t(|z|)\frac{1}{(1-\phi_t(|z|)^2)^\alpha\log\frac{2e^\frac{1}{\alpha}}{1-\phi_t(|z|)^2}}\|f\|_{H^\infty_{\alpha,\log}}.$$
    Therefore, we can obtain that
\begin{align}\label{eq2.5}
     \|\mathcal{H}f\|_{H^\infty_\alpha}&=\sup_{z\in\mathbb{D}}(1-|z|^2)^\alpha\int_{0}^{1}T_tf(z)dt\notag\\
     &\leq\sup_{0\leq r<1}(1-r^2)^\alpha\int_{0}^{1}\frac{1}{1-(1-t)r}\frac{1}{\left(1-\left(\frac{t}{1-(1-t)r}\right)^2\right)^\alpha\log\frac{2e^\frac{1}{\alpha}}{1-\left(\frac{t}{1-(1-t)r}\right)^2}}dt\|f\|_{H^\infty_{\alpha,\log}}\notag\\
     &=\sup_{0\leq r<1}\int_{0}^{1}\frac{(1+r)^\alpha\left((t-1)r+1\right)^{2\alpha-1}}{(1-t)^\alpha \left((t-1)r+1+t\right)^\alpha\log\frac{2e^\frac{1}{\alpha}}{1-\left(\frac{t}{1-(1-t)r}\right)^2}}dt\|f\|_{H^\infty_{\alpha,\log}}.
\end{align}
 Then we obtain that
 \begin{align}\label{eq2.6}
 	\|\mathcal{H}\|_{H^\infty_{\alpha,\log} \rightarrow H^\infty_\alpha}\leq\sup_{0\leq r<1}\int_{0}^{1}\frac{(1+r)^\alpha\left((t-1)r+1\right)^{2\alpha-1}}{(1-t)^\alpha \left((t-1)r+1+t\right)^\alpha\log\frac{2e^\frac{1}{\alpha}}{1-\left(\frac{t}{1-(1-t)r}\right)^2}}dt<\infty.
 \end{align}
 Noting that
 \begin{align*}
        \lim_{r\rightarrow0}\int_{0}^{1}\frac{(1+r)^\alpha\left((t-1)r+1\right)^{2\alpha-1}}{(1-t)^\alpha \left((t-1)r+1\right)^\alpha\log\frac{2e^\frac{1}{\alpha}}{1-\left(\frac{t}{1-(1-t)r+t}\right)^2}}dt=\int_{0}^{1}\frac{1}{(1-t^2)^\alpha\log\frac{2e^\frac{1}{\alpha}}{1-t^2}}dt,
	\end{align*}
 We  finish the proof of the theorem.
\end{proof}

\begin{lemma} \cite{3} \label{Le2.1}
	Let $\frac{1}{2}<\alpha<1$, then\\
	$\sup_{z\in\mathbb{D}} |1-(1-t)z|^{2\alpha-1}\left(\frac{(1-|z|^2)}{|1-(1-t)z|^2-t^2}\right)^\alpha=$
    \begin{align*}
	\begin{cases}
		&\frac{t^{\alpha-1}}{(1-t)^\alpha}, if\frac{1}{2}<\alpha\leq\frac{2}{3}\ and\ 0<t<1,\  or\ if \frac{2}{3}<\alpha<1\ and \ \frac{3\alpha-2}{4\alpha-2}\leq t<1,\\
	    &(1-x_0)^{2\alpha-1}\left(\frac{1-|\frac{x_0}{1-t}|^2}{(1-x_0)^2-t^2}\right)^\alpha, if\ \frac{2}{3}<\alpha<1  and \ 0< t<\frac{3\alpha-2}{4\alpha-2},
	\end{cases}
    \end{align*}
	where$$x_0=\frac{\alpha+2\alpha t-t-\sqrt{4\alpha^2t-2\alpha t+\alpha^2-2\alpha+1}}{2\alpha-1}.$$
\end{lemma}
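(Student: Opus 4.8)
The plan is to collapse the two–dimensional supremum over $z\in\mathbb{D}$ to a one–dimensional maximization on the real axis, and then to run a calculus argument whose outcome is dictated by the sign of a single boundary quantity. Writing $a=1-t$ and fixing $|z|=s$, I would set $z=se^{i\theta}$ and $u=|1-az|^2=1-2as\cos\theta+a^2s^2$, which ranges over $[(1-as)^2,(1+as)^2]$ as $\theta$ varies. The expression then reads $(1-s^2)^\alpha\,h(u)$ with $h(u)=u^{(2\alpha-1)/2}(u-t^2)^{-\alpha}$, and one checks positivity of the denominator via $u-t^2\ge(1-as)^2-t^2=(1-t)(1-s)\bigl(1+t-(1-t)s\bigr)>0$. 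A one–line logarithmic derivative gives $h'(u)/h(u)=\dfrac{-u-(2\alpha-1)t^2}{2u(u-t^2)}<0$ for $\alpha>\tfrac12$, so $h$ is strictly decreasing; hence the maximum over $\theta$ occurs at the smallest $u$, namely $\theta=0$. This shows the supremum is attained on the nonnegative real axis, reducing everything to real $z=s\in[0,1)$.

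Next I would substitute $x=(1-t)s\in[0,1-t)$ and exploit the factorizations $(1-t)^2-x^2=(1-t-x)(1-t+x)$ and $(1-x)^2-t^2=(1-x-t)(1-x+t)$. The common factor $1-t-x$ cancels, and the objective collapses to
\[
G(x)=\frac{(1-x)^{2\alpha-1}(1-t+x)^\alpha}{(1-t)^{2\alpha}(1+t-x)^\alpha}.
\]
Differentiating logarithmically yields $G'(x)/G(x)=N(x)\big/\bigl[(1-x)(1-t+x)(1+t-x)\bigr]$, where the denominator is positive on $[0,1-t)$ and $N(x)=-(2\alpha-1)\bigl(1-(x-t)^2\bigr)+2\alpha(1-x)$ is an upward parabola in $x$.

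The decisive observation is that the vertex of $N$ lies at $x_v=t+\frac{\alpha}{2\alpha-1}>1$, because $\frac{\alpha}{2\alpha-1}>1$ whenever $\alpha<1$. Thus $N$ is strictly decreasing on the whole interval $[0,1-t)$, while $N(0)=1+(2\alpha-1)t^2>0$. Consequently the behavior of $G$ is entirely governed by the sign of $N(1-t)$, which has the same sign as $\frac{G'(1-t)}{G(1-t)}=\frac{2-3\alpha}{2t}+\frac{\alpha}{2(1-t)}$; this quantity vanishes exactly when $t=\frac{3\alpha-2}{4\alpha-2}$. Letting $x\to1-t$ (that is, $s\to1$) gives the limiting value $G\to\frac{t^{\alpha-1}}{(1-t)^\alpha}$.

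Finally I would split into cases. If $\alpha\le\tfrac23$, or if $\alpha>\tfrac23$ and $t\ge\frac{3\alpha-2}{4\alpha-2}$, then $N(1-t)\ge0$, so by monotonicity $N>0$ on all of $[0,1-t)$ and $G$ increases to the boundary value $\frac{t^{\alpha-1}}{(1-t)^\alpha}$. If instead $\alpha>\tfrac23$ and $t<\frac{3\alpha-2}{4\alpha-2}$, then $N(1-t)<0$ while $N(0)>0$, so $N$ has a unique zero $x_0\in(0,1-t)$ at which $G$ attains an interior maximum; solving the quadratic $N(x)=0$ and taking the smaller root produces the stated $x_0$, whose discriminant simplifies to $4\alpha^2t-2\alpha t+\alpha^2-2\alpha+1$, and evaluating $G(x_0)$ gives the second formula. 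The main obstacle, and the part deserving the most care, is the reduction to the real axis together with the verification that the vertex of $N$ lies to the right of $[0,1-t)$; once monotonicity of $N$ is secured the case split is forced and both formulas drop out by evaluating $G$ at the endpoint and at $x_0$.
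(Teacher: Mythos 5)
The paper does not prove this lemma at all --- it is quoted verbatim from reference \cite{3} --- so there is no in-paper proof to compare against; judged on its own, your argument is correct and complete. Every step checks out: the reduction to $z=s\in[0,1)$ via the monotonicity of $h(u)=u^{(2\alpha-1)/2}(u-t^2)^{-\alpha}$ is valid because $|1-(1-t)z|^2>t^2$ on $\mathbb{D}$ and $u$ is minimized at $\theta=0$; the cancellation producing $G(x)$, the identity $N(x)=1+(2\alpha-1)(x-t)^2-2\alpha x$, the vertex location $t+\frac{\alpha}{2\alpha-1}>1$, the sign computation $N(1-t)=2t\bigl[(2-3\alpha)+(4\alpha-2)t\bigr]$, the boundary limit $G(x)\to t^{\alpha-1}(1-t)^{-\alpha}$, and the identification of $G(x_0)$ with the stated expression all verify. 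This is essentially the same route taken in the cited source, so no further comparison is needed.
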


\begin{lemma}\label{Le2.2}
Let $0<\alpha<1$, then

\begin{align*}
	\|T_t\|_{H^\infty_{\alpha,\log} \rightarrow H^\infty_\alpha}\leq
	\begin{cases}
		&\frac{t^{\alpha-1}}{(1-t)^\alpha\log\frac{(2-t)^2e^{\frac{1}{\alpha}}}{2-2t}}, if0<\alpha\leq\frac{2}{3}\ and\ 0<t<1, \ or\ if \frac{2}{3}<\alpha<1\ and \ \frac{3\alpha-2}{4\alpha-2}\leq t<1,\\
		&\frac{(1-x_0)^{2\alpha-1}}{\log\frac{(2-t)^2e^{\frac{1}{\alpha}}}{2-2t}}\left(\frac{1-|\frac{x_0}{1-t}|^2}{(1-x_0)^2-t^2}\right)^\alpha, if\ \frac{2}{3}<\alpha<1   and \ 0< t<\frac{3\alpha-2}{4\alpha-2},
	\end{cases}
\end{align*}
where$$x_0=\frac{\alpha+2\alpha t-t-\sqrt{4\alpha^2t-2\alpha t+\alpha^2-2\alpha+1}}{2\alpha-1}.$$
\end{lemma}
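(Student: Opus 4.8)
The plan is to start from the pointwise estimate \eqref{eq2.4} already established in the proof of Theorem \ref{Th2.1}. For every $f$ with $\|f\|_{H^\infty_{\alpha,\log}}\le 1$ and every $z\in\mathbb{D}$ it gives
$$(1-|z|^2)^\alpha|T_tf(z)|\le (1-|z|^2)^\alpha|w_t(z)|\frac{1}{(1-|\phi_t(z)|^2)^\alpha}\cdot\frac{1}{\log\frac{2e^{1/\alpha}}{1-|\phi_t(z)|^2}}.$$
Taking the supremum over $z\in\mathbb{D}$ and over the unit ball of $H^\infty_{\alpha,\log}$ bounds $\|T_t\|_{H^\infty_{\alpha,\log}\to H^\infty_\alpha}$, so the task reduces to estimating the right-hand side. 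The idea is to split it into an ``algebraic'' factor, which is exactly the quantity whose supremum is evaluated in Lemma \ref{Le2.1}, and a ``logarithmic'' factor, which I will control by a constant independent of $z$. Note that, in contrast with Theorem \ref{Th2.1}, no reduction to the real axis via the monotonicity of $g$ is needed here, since the logarithmic factor can be bounded uniformly on the whole disk.

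First I would carry out the factoring. Using $|w_t(z)|=|1-(1-t)z|^{-1}$ and $1-|\phi_t(z)|^2=\frac{|1-(1-t)z|^2-t^2}{|1-(1-t)z|^2}$, a direct simplification gives
$$(1-|z|^2)^\alpha|w_t(z)|\frac{1}{(1-|\phi_t(z)|^2)^\alpha}=|1-(1-t)z|^{2\alpha-1}\left(\frac{1-|z|^2}{|1-(1-t)z|^2-t^2}\right)^\alpha,$$
which is precisely the function whose supremum over $\mathbb{D}$ is computed in Lemma \ref{Le2.1}.

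The key step is the uniform bound on the logarithmic factor, and this is the step I expect to be the crux. For every $z\in\mathbb{D}$ the triangle inequality gives $|1-(1-t)z|\le 1+(1-t)|z|<2-t$, hence $|\phi_t(z)|=\frac{t}{|1-(1-t)z|}>\frac{t}{2-t}$ and therefore
$$1-|\phi_t(z)|^2<1-\frac{t^2}{(2-t)^2}=\frac{4(1-t)}{(2-t)^2}\le 1.$$
Consequently $\log\frac{2e^{1/\alpha}}{1-|\phi_t(z)|^2}>\log\frac{(2-t)^2e^{1/\alpha}}{2-2t}>0$ for all $z\in\mathbb{D}$, so that $\frac{1}{\log\frac{2e^{1/\alpha}}{1-|\phi_t(z)|^2}}\le\frac{1}{\log\frac{(2-t)^2e^{1/\alpha}}{2-2t}}$. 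This decouples the two factors: the logarithmic one becomes a constant that can be pulled out of the supremum, which is the whole point of the argument. Geometrically, the constant is the extreme value of the logarithmic factor, attained in the limit $z\to-1$ where $|1-(1-t)z|$ is largest, which explains the appearance of $\frac{(2-t)^2}{2-2t}$.

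Finally I would combine the pieces: the constant $\frac{1}{\log\frac{(2-t)^2e^{1/\alpha}}{2-2t}}$ comes out, and what remains is $\sup_{z\in\mathbb{D}}|1-(1-t)z|^{2\alpha-1}\big(\frac{1-|z|^2}{|1-(1-t)z|^2-t^2}\big)^\alpha$, which is exactly Lemma \ref{Le2.1}. Substituting its two values, $\frac{t^{\alpha-1}}{(1-t)^\alpha}$ in the first regime and $(1-x_0)^{2\alpha-1}\big(\frac{1-|x_0/(1-t)|^2}{(1-x_0)^2-t^2}\big)^\alpha$ in the second, yields the two cases claimed for $\|T_t\|_{H^\infty_{\alpha,\log}\to H^\infty_\alpha}$. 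The one point needing care is that Lemma \ref{Le2.1} is quoted only for $\frac12<\alpha<1$, whereas the first case here includes $0<\alpha\le\frac12$; there I would note that the supremum identity $\sup_{z\in\mathbb{D}}(\cdots)=\frac{t^{\alpha-1}}{(1-t)^\alpha}$ still holds, the extremal value being attained in the radial limit (since $\lim_{r\to1^-}|1-(1-t)r|^{2\alpha-1}\big(\frac{1-r^2}{|1-(1-t)r|^2-t^2}\big)^\alpha=\frac{t^{\alpha-1}}{(1-t)^\alpha}$), which follows from the argument of \cite{3} verbatim or can be checked directly.
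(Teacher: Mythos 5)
Your decomposition is exactly the paper's for the range $\frac12<\alpha<1$: bound the logarithmic factor uniformly by $\frac{1}{\log\frac{(2-t)^2e^{1/\alpha}}{2-2t}}$ using $|1-(1-t)z|<2-t$ (hence $1-|\phi_t(z)|^2<\frac{4(1-t)}{(2-t)^2}$), pull it out of the supremum, and apply Lemma \ref{Le2.1} to the remaining algebraic factor $|1-(1-t)z|^{2\alpha-1}\bigl(\frac{1-|z|^2}{|1-(1-t)z|^2-t^2}\bigr)^\alpha$; your identification of that factor with the quantity in Lemma \ref{Le2.1} is a correct computation, and this is precisely the paper's display (\ref{eq2.8}).

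The one place where you diverge is the range $0<\alpha\le\frac12$, and there your argument has a real (though fillable) hole. You correctly flag that Lemma \ref{Le2.1} is only stated for $\frac12<\alpha<1$, but your justification for extending the identity $\sup_{z\in\mathbb{D}}|1-(1-t)z|^{2\alpha-1}\bigl(\frac{1-|z|^2}{|1-(1-t)z|^2-t^2}\bigr)^\alpha=\frac{t^{\alpha-1}}{(1-t)^\alpha}$ consists of the radial limit as $r\to1^-$, which only proves the inequality ``$\ge$''; the direction actually needed for the lemma is the upper bound, and that is not supplied by ``attained in the radial limit'' nor by citing \cite{3} verbatim, since the relevant lemma there is also restricted to $\frac12<\alpha<1$. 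The missing verification is exactly what the paper does instead for $0<\alpha\le\frac12$: write $|w_t(z)|=\frac{1}{\sqrt{t(1-t)}}|\phi_t'(z)|^{1/2}$, apply the Schwarz--Pick inequality $(1-|z|^2)|\phi_t'(z)|\le 1-|\phi_t(z)|^2$ to absorb the weight, and use $|\phi_t'(z)|\le\frac{1-t}{t}$ together with $\tfrac12-\alpha\ge0$ to get the factor $\bigl(\frac{1-t}{t}\bigr)^{1/2-\alpha}$, yielding $\frac{t^{\alpha-1}}{(1-t)^\alpha}$. So your ``can be checked directly'' is true, but it is doing genuine work: that check is the content of the paper's first case, and your proof is not complete until it is carried out.
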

\begin{proof}
	Assume first that $0<\alpha\leq\frac{1}{2}$ and $f\in H^\infty_{\alpha,\log}$. Then, the estimate $|\phi'_t(z)|\leq\frac{1-t}{t}$, $\frac{1}{\log\frac{2e^\frac{1}{\alpha}}{1-|\phi_t(z)|^2}}\leq\frac{1}{\log\frac{(2-t)^2e^{\frac{1}{\alpha}}}{2-2t}}$ and the Schwarz-Pick lemma yield that
	\begin{align}\label{eq2.7}
			\|T_t(f)\|_{ H^\infty_\alpha}&=\sup_{z\in\mathbb{D}}|T_tf(z)|(1-|z|^2)^\alpha\notag\\
			&=\sup_{z\in\mathbb{D}}\frac{1}{\sqrt{t(1-t)}}|\phi'_t(z)|^{\frac{1}{2}-\alpha}|f(\phi_t(z))|(1-|z|^2)^\alpha|\phi'_t(z)|^\alpha\notag\\
			&=\sup_{z\in\mathbb{D}}\frac{1}{\sqrt{t(1-t)}}\frac{|\phi'_t(z)|^{\frac{1}{2}-\alpha}}{\log\frac{2e^\frac{1}{\alpha}}{1-|\phi_t(z)|^2}}|f(\phi_t(z))|(1-|z|^2)^\alpha|\phi'_t(z)|^\alpha\log\frac{2e^\frac{1}{\alpha}}{1-|\phi_t(z)|^2}\notag\\
			&\leq\frac{1}{\sqrt{t(1-t)}}\left(\frac{1-t}{t}\right)^{\frac{1}{2}-\alpha}\frac{1}{\log\frac{(2-t)^2e^{\frac{1}{\alpha}}}{2-2t}}\sup_{z\in\mathbb{D}}|f(\phi_t(z))|(1-|\phi_t(z)|^2)^\alpha\log\frac{2e^\frac{1}{\alpha}}{1-|\phi_t(z)|^2}\notag\\
			&\leq\frac{t^{\alpha-1}}{(1-t)^\alpha\log\frac{(2-t)^2e^{\frac{1}{\alpha}}}{2-2t}}\|f\|_{H^\infty_{\alpha,\log}}.
	\end{align}
	We get the result for $0<\alpha\leq\frac{1}{2}$.\\
	Let $\frac{1}{2}<\alpha<1$. According to \cite{5}, we can get that
	\begin{align}\label{eq2.8}
		\|T_t\|_{H^\infty_{\alpha,\log} \rightarrow H^\infty_\alpha}&=\sup_{z\in\mathbb{D}}\frac{1}{|1-(1-t)z|}\frac{(1-|z|^2)^\alpha}{(1-\frac{t^2}{|1-(1-t)z|^2})^\alpha\log\frac{2e^\frac{1}{\alpha}}{1-\frac{t^2}{|1-(1-t)z|^2}}}\notag\\
		&\leq\frac{1}{\log\frac{(2-t)^2e^{\frac{1}{\alpha}}}{2-2t}}\sup_{z\in\mathbb{D}}\frac{1}{|1-(1-t)z|}\frac{(1-|z|^2)^\alpha}{(1-\frac{t^2}{|1-(1-t)z|^2})^\alpha}.
	\end{align}
	According to lemma \ref{Le2.1}, we can get the result.
\end{proof}
\begin{theorem}\label{Th2.3}
	For $0<\alpha\leq\frac{2}{3}$,  then
    $$\|\mathcal{H}\|_{H^\infty_{\alpha,\log} \rightarrow H^\infty_\alpha}\leq
	\int_{0}^{1}\frac{t^{\alpha-1}}{(1-t)^\alpha\log\frac{(2-t)^2e^{\frac{1}{\alpha}}}{2-2t}}dt.$$
	
For $\frac{2}{3}<\alpha<1$, then
	$$\|\mathcal{H}\|_{H^\infty_{\alpha,\log} \rightarrow H^\infty_\alpha}\leq\int_{0}^{\frac{3\alpha-2}{4\alpha-2}} \frac{G(x_0)}{\log\frac{(2-t)^2e^{\frac{1}{\alpha}}}{2-2t}}dt+\int_{\frac{3\alpha-2}{4\alpha-2}}^{1}\frac{t^{\alpha-1}}{(1-t)^\alpha\log\frac{(2-t)^2e^{\frac{1}{\alpha}}}{2-2t}}dt, $$
	where$$G(x)=(1-x)^{2\alpha-1}\left(\frac{1-|\frac{x}{1-t}|^2}{(1-x)^2-t^2}\right)^\alpha$$
	and$$x_0=\frac{\alpha+2\alpha t-t-\sqrt{4\alpha^2t-2\alpha t+\alpha^2-2\alpha+1}}{2\alpha-1}.$$
\end{theorem}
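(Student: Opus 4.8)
The plan is to combine the integral representation \eqref{eq2.2} of $\mathcal{H}$ with the operator-norm bounds for $T_t$ recorded in Lemma \ref{Le2.2}. For $f\in H^\infty_{\alpha,\log}$ and $z\in\mathbb{D}$, since $\mathcal{H}f(z)=\int_0^1 T_tf(z)\,dt$ and $(1-|z|^2)^\alpha|T_tf(z)|\leq\|T_tf\|_{H^\infty_\alpha}$ for every $z$, the triangle inequality for integrals gives
$$(1-|z|^2)^\alpha|\mathcal{H}f(z)|\leq\int_0^1(1-|z|^2)^\alpha|T_tf(z)|\,dt\leq\int_0^1\|T_tf\|_{H^\infty_\alpha}\,dt.$$
The right-hand side is independent of $z$, so taking the supremum over $z\in\mathbb{D}$ yields $\|\mathcal{H}f\|_{H^\infty_\alpha}\leq\int_0^1\|T_tf\|_{H^\infty_\alpha}\,dt$. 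Bounding $\|T_tf\|_{H^\infty_\alpha}\leq\|T_t\|_{H^\infty_{\alpha,\log}\to H^\infty_\alpha}\|f\|_{H^\infty_{\alpha,\log}}$ and then taking the supremum over $\|f\|_{H^\infty_{\alpha,\log}}\leq 1$, I would conclude
$$\|\mathcal{H}\|_{H^\infty_{\alpha,\log}\to H^\infty_\alpha}\leq\int_0^1\|T_t\|_{H^\infty_{\alpha,\log}\to H^\infty_\alpha}\,dt.$$

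Next, I would substitute the explicit estimates from Lemma \ref{Le2.2}. For $0<\alpha\leq\frac{2}{3}$ the bound $\|T_t\|_{H^\infty_{\alpha,\log}\to H^\infty_\alpha}\leq\frac{t^{\alpha-1}}{(1-t)^\alpha\log\frac{(2-t)^2e^{1/\alpha}}{2-2t}}$ holds for all $t\in(0,1)$, so integrating over $(0,1)$ gives the first claimed inequality at once. For $\frac{2}{3}<\alpha<1$ the bound in Lemma \ref{Le2.2} splits according to whether $0<t<\frac{3\alpha-2}{4\alpha-2}$ or $\frac{3\alpha-2}{4\alpha-2}\leq t<1$; I would therefore break the integral at the point $\frac{3\alpha-2}{4\alpha-2}$ and apply the appropriate piece on each subinterval. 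On $[0,\frac{3\alpha-2}{4\alpha-2}]$ the relevant factor is exactly $G(x_0)=(1-x_0)^{2\alpha-1}\big(\frac{1-|x_0/(1-t)|^2}{(1-x_0)^2-t^2}\big)^\alpha$, while on $[\frac{3\alpha-2}{4\alpha-2},1]$ it is $\frac{t^{\alpha-1}}{(1-t)^\alpha}$, reproducing precisely the two-integral bound in the statement.

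Finally, I would record that the integrals converge, so the bounds are finite and the argument is not vacuous. Near $t=0$ the integrand behaves like $t^{\alpha-1}$, which is integrable since $\alpha>0$; near $t=1$ it behaves like $(1-t)^{-\alpha}$, integrable since $\alpha<1$; and the logarithmic factor in the denominator stays bounded below, only improving integrability. Given Lemma \ref{Le2.2}, this theorem is essentially an assembly step: the genuine work lies in the pointwise $T_t$-estimates already established, and the only point deserving care here is the legitimacy of moving the $H^\infty_\alpha$-supremum inside the $t$-integral, which is the elementary bound used in the first display and requires only that the nonnegative integrand $(1-|z|^2)^\alpha|T_tf(z)|$ be jointly measurable in $(z,t)$.
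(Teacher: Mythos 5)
Your argument is correct and follows essentially the same route as the paper: the paper likewise bounds $\|\mathcal{H}f\|_{H^\infty_\alpha}\leq\int_0^1\|T_t\|_{H^\infty_{\alpha,\log}\to H^\infty_\alpha}\,dt$ by moving the supremum inside the $t$-integral and then inserts the piecewise estimates of Lemma \ref{Le2.2}, splitting at $t=\frac{3\alpha-2}{4\alpha-2}$ when $\frac{2}{3}<\alpha<1$. Your additional remarks on the integrability of the bound near $t=0$ and $t=1$ and on measurability are sound and slightly more careful than the paper's own write-up.
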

\begin{proof}
	Let $0<\alpha<1$, we have that
	\begin{align}\label{eq2.9}
		\|\mathcal{H}f\|_{ H^\infty_\alpha}&=\sup_{z\in\mathbb{D}}|\int_{0}^{1}T_tf(z)dt(1-|z|^2)^\alpha|\notag\\
		&\leq\int_{0}^{1}\sup_{z\in\mathbb{D}}|T_tf(z)dt(1-|z|^2)^\alpha|dt\notag\\
		&\leq\int_{0}^{1}\|T_t\|_{H^\infty_{\alpha,\log} \rightarrow H^\infty_\alpha}dt.
	\end{align}
	We will combine lemma \ref{Le2.2} and (\ref{eq2.8}) to obtain the result.
\end{proof}
\section{Norm  of the Hilbert matrix $\|\mathcal{H}\|_{H^\infty_{\alpha,\log} \rightarrow H^\infty_{\alpha,\log}}$}
In this section,  we calculate the norm of the Hilbert matrix operator  $\mathcal{H}$ acting on   the logarithmically weighted Korenblum space $H^\infty_{\alpha,\log}$.

 We Know that the Beta function defined as
$$B(s,t)=\int_{0}^{1}x^{s-1}(1-x)^{t-1}dx,$$
where $s,t\in\mathbb{C}/\mathbb{Z}$ satisfing $R(t) > 0$ and $R(s) > 0$. It can be checked that$$B(s, t) =\Gamma(s)\Gamma(t)/\Gamma(s+t),$$
where $\Gamma$ is the Gamma function. We will also use the well-known equation
$$\Gamma(z)\Gamma(1-z)=\frac{\pi}{\sin(\pi z)},z\in\mathbb{C}/\mathbb{Z}.$$
We can easily obtain that $$\int_{0}^{1}\frac{t^{\alpha-1}}{(1-t)^\alpha}dt=\frac{\pi}{\sin\alpha\pi}.$$

\begin{theorem}
	For $0<\alpha<1$, then the Hilbert matrix operator $\mathcal{H}$ is bounded on the logarithmically weighted Korenblum space  $H^\infty_{\alpha,\log}$. Moreover, the norm of $\mathcal{H}$ satisfies the following equations:
$$\|\mathcal{H}\|_{H^\infty_{\alpha,\log} \rightarrow H^\infty_{\alpha,\log}}=\sup_{0\leq r<1}\int_{0}^{1}\frac{(1+r)^{\alpha}\left((t-1)r+1\right)^{2\alpha-1}\log\frac{2e^\frac{1}{\alpha}}{1-r^2}}{(1-t)^{\alpha}\left((t-1)r+1+t\right)^{\alpha}
\log\frac{2e^\frac{1}{\alpha}}{1-\left(\frac{t}{1-(1-t)r}\right)^2}}dt,$$
and
 $$\|\mathcal{H}\|_{H^\infty_{\alpha,\log} \rightarrow {H^\infty_{\alpha,\log} }}
	\geq\frac{\pi}{\sin\alpha\pi}.$$
\end{theorem}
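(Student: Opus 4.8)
The plan is to mirror the proof of Theorem \ref{Th2.1} almost verbatim, the only new feature being the extra weight $\log\frac{2e^{1/\alpha}}{1-|z|^2}$ that the target norm now carries. Since this factor depends on $|z|$ alone, it can be pulled outside the $t$-integral, which is exactly what produces the additional numerator logarithm in the asserted formula; the $t$-dependent part of the computation is unchanged.

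For the upper bound I would start from $\mathcal{H}f(z)=\int_0^1 T_tf(z)\,dt$ and estimate, for $\|f\|_{H^\infty_{\alpha,\log}}\le 1$,
$$|T_tf(z)|=|w_t(z)|\,|f(\phi_t(z))|\le \frac{|w_t(z)|}{(1-|\phi_t(z)|^2)^\alpha\log\frac{2e^{1/\alpha}}{1-|\phi_t(z)|^2}}.$$
Using $|w_t(z)|\le w_t(|z|)$, $|\phi_t(z)|\le\phi_t(|z|)$ and the monotonicity of $g(x)=x^\alpha\log\frac{2e^{1/\alpha}}{x}$ on $(0,2)$ (established in the proof of Theorem \ref{Th2.1}), I replace $z$ by $r=|z|$ in the majorant. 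Multiplying by $(1-|z|^2)^\alpha\log\frac{2e^{1/\alpha}}{1-|z|^2}$, factoring the $r$-only logarithm out of the $t$-integral, and carrying out the same algebraic simplification as in \eqref{eq2.5} (writing $A=1-(1-t)r$, so that $(1-r^2)^\alpha w_t(r)(1-\phi_t(r)^2)^{-\alpha}=\frac{(1+r)^\alpha A^{2\alpha-1}}{(1-t)^\alpha(A+t)^\alpha}$) yields precisely the claimed integrand as an upper bound for $\|\mathcal{H}\|_{H^\infty_{\alpha,\log}\to H^\infty_{\alpha,\log}}$.

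For the reverse inequality I would test with the same extremal function $f_\alpha(z)=\frac{1}{(1-z^2)^\alpha\log\frac{2e^{1/\alpha}}{1-z^2}}$, for which $\|f_\alpha\|_{H^\infty_{\alpha,\log}}=1$ is already shown in Theorem \ref{Th2.1}. Evaluating $\mathcal{H}f_\alpha$ along the real segment $z=r\in[0,1)$ makes every quantity positive, so $|\mathcal{H}f_\alpha(r)|=\int_0^1 w_t(r)f_\alpha(\phi_t(r))\,dt$ and each of the inequalities of the previous step becomes an equality; taking the supremum over $r$ recovers the same integral, so the two bounds coincide and the formula for the norm follows. The explicit lower bound $\pi/\sin\alpha\pi$ I would then obtain by letting $r\to1^-$: for each fixed $t$ the non-logarithmic factor tends to $\frac{t^{\alpha-1}}{(1-t)^\alpha}$, while the ratio of the two logarithms tends to $1$ (both blow up at the common rate $-\log(1-r)$, the $t$-dependent constants being negligible), so Fatou's lemma gives $\|\mathcal{H}\|\ge\liminf_{r\to1}\int_0^1(\cdots)\,dt\ge\int_0^1\frac{t^{\alpha-1}}{(1-t)^\alpha}\,dt=\frac{\pi}{\sin\alpha\pi}$.

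The step requiring genuine care — and which I expect to be the main obstacle — is the boundedness assertion, i.e. that the supremum defining the norm is finite. The difficulty is exactly the new numerator logarithm $\log\frac{2e^{1/\alpha}}{1-r^2}$, which diverges as $r\to1$ and must be absorbed by the denominator logarithm; for $r$ bounded away from $1$ the integrand is plainly integrable, so the real issue is uniform control as $r\to1$. Away from $t=0$ the ratio of logarithms stays near $1$, but near $t=0$ it can be large, so a naive pointwise majorant fails. I would resolve this through the scaling $t=(1-r)s$, under which the small-$t$ contribution is seen to carry a prefactor of order $(1-r)^\alpha\log\frac{1}{1-r}$, which tends to $0$; hence that region does not affect finiteness and the dominant contribution still comes from $t$ of order one, where the integrand converges to $\frac{t^{\alpha-1}}{(1-t)^\alpha}$. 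This makes the supremum finite, establishes boundedness, and is fully consistent with the Fatou lower bound above.
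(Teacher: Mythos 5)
Your proposal follows essentially the same route as the paper: the same extremal function $f_\alpha(z)=\frac{1}{(1-z^2)^\alpha\log\frac{2e^{1/\alpha}}{1-z^2}}$, the same pointwise majorization of $T_tf$ using $|w_t(z)|\le w_t(|z|)$, $|\phi_t(z)|\le\phi_t(|z|)$ and the monotonicity of $g(x)=x^\alpha\log\frac{2e^{1/\alpha}}{x}$, reduction to the real segment so that the upper and lower bounds coincide, and the limit $r\to1^-$ (where the ratio of the two logarithms tends to $1$) to extract the explicit bound $\pi/\sin\alpha\pi$. The only substantive difference is that you actually justify the finiteness of the supremum via the scaling $t=(1-r)s$ near $t=0$, a point the paper asserts without argument; this is a welcome refinement but not a different method.
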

\begin{proof}
	Let   $$f_\alpha(z)=\frac{1}{(1-z^2)^\alpha\log\frac{2e^\frac{1}{\alpha}}{1-z^2}}.$$
    From the proof of Theorem 3.1, we know that $\|f_\alpha\|_{H^\infty_{\alpha,\log} }=1$ and
    $$T_tf_\alpha(z)=\frac{1}{1-(1-t)z}\frac{1}{\left(1-\left(\frac{t}{1-(1-t)z}\right)^2\right)^\alpha\log\frac{2e^\frac{1}{\alpha}}{1-\left(\frac{t}{1-(1-t)z}\right)^2}}.$$
Then
    \begin{align}\label{eq3.1}
    	 \|\mathcal{H}\|_{H^\infty_{\alpha,\log} \rightarrow {H^\infty_{\alpha,\log} }}&\geq\|\mathcal{H}f_\alpha\|_{H^\infty_{\alpha,\log}}=\sup_{z\in\mathbb{D}}(1-|z|^2)^\alpha\log\frac{2e^\frac{1}{\alpha}}{1-|z|^2}|\mathcal{H}f_\alpha(z)|\notag\\
    	 &\geq\sup_{0\leq r<1}(1-r^2)^\alpha\log\frac{2e^\frac{1}{\alpha}}{1-r^2}|\mathcal{H}f_\alpha(r)|\notag\\
    	 &=\sup_{0\leq r<1}\int_{0}^{1}\frac{(1+r)^{\alpha}\left((t-1)r+1\right)^{2\alpha-1}\log\frac{2e^\frac{1}{\alpha}}{1-r^2}}{(1-t)^{\alpha}\left((t-1)r+1+t\right)^{\alpha}\log\frac{2e^\frac{1}{\alpha}}{1-\left(\frac{t}{1-(1-t)r}\right)^2}}dt.
    \end{align}

    Next, we will consider the upper bound for the norm. According to (\ref{eq2.4}), we have that	
    \begin{align}\label{eq3.2}
    \|\mathcal{H}f\|_{H^\infty_{\alpha,\log}}&=\sup_{z\in\mathbb{D}}(1-|z|^2)^\alpha\log\frac{2e^\frac{1}{\alpha}}{1-|z|^2}|\int_{0}^{1}T_tf(z)dt|\notag\\
    &\leq\sup_{z\in\mathbb{D}}(1-|z|^2)^\alpha\log\frac{2e^\frac{1}{\alpha}}{1-|z|^2}\int_{0}^{1}|T_tf(z)|dt\notag\\
    &\leq\sup_{z\in\mathbb{D}}(1-|z|^2)^\alpha\log\frac{2e^\frac{1}{\alpha}}{1-|z|^2}\int_{0}^{1}\lvert w_t(z)\rvert\frac{1}{(1-|\phi_t(z)|^2)^\alpha\log\frac{2e^\frac{1}{\alpha}}{1-|\phi_t(z)|^2}}\|f\|_{H^\infty_{\alpha,\log} }dt\notag\\
    &\leq\sup_{0\leq r<1}(1-r^2)^\alpha\log\frac{2e^\frac{1}{\alpha}}{1-r^2}\int_{0}^{1}\lvert w_t(r)\rvert\frac{1}{(1-\phi^2_t(r))^\alpha\log\frac{2e^\frac{1}{\alpha}}{1-\phi^2_t(r)}}\|f\|_{H^\infty_{\alpha,\log} }dt\notag\\
    &=\sup_{0\leq r<1}\int_{0}^{1}\frac{(1+r)^{\alpha}\left((t-1)r+1\right)^{2\alpha-1}\log\frac{2e^\frac{1}{\alpha}}{1-r^2}}{(1-t)^{\alpha}\left((t-1)r+1+t\right)^{\alpha}\log\frac{2e^\frac{1}{\alpha}}{1-\left(\frac{t}{1-(1-t)r}\right)^2}}dt.
    \end{align}
    According to (\ref{eq3.1}) and (\ref{eq3.2}), we obtain that
    $$\|\mathcal{H}\|_{H^\infty_{\alpha,\log} \rightarrow H^\infty_{\alpha,\log}}=\sup_{0\leq r<1}\int_{0}^{1}\frac{(1+r)^{\alpha}\left((t-1)r+1\right)^{2\alpha-1}\log\frac{2e^\frac{1}{\alpha}}{1-r^2}}{(1-t)^{\alpha}\left((t-1)r+1+t\right)^{\alpha}
\log\frac{2e^\frac{1}{\alpha}}{1-\left(\frac{t}{1-(1-t)r}\right)^2}}dt<\infty,$$
  then   $\mathcal{H}$ is bounded on $H^\infty_{\alpha,\log}$.

       Noting that$\lim\limits_{r\rightarrow1}\frac{\log\frac{2e^\frac{1}{\alpha}}{1-r^2}}{\log\frac{2e^\frac{1}{\alpha}}{1-\left(\frac{t}{1-(1-t)r}\right)^2}}=1$, we easily obtain that
    \begin{align*}
    \|\mathcal{H}\|_{H^\infty_{\alpha,\log} \rightarrow {H^\infty_{\alpha,\log} }}
    &\geq\lim\limits_{r\rightarrow1}\int_{0}^{1}\frac{(1+r)^{\alpha}\left((t-1)r+1\right)^{2\alpha-1}\log\frac{2e^\frac{1}{\alpha}}{1-r^2}}{(1-t)^{\alpha}\left((t-1)r+1+t\right)^{\alpha}\log\frac{2e^\frac{1}{\alpha}}{1-\left(\frac{t}{1-(1-t)r}\right)^2}}dt\\
    &=\int_{0}^{1}\frac{t^{\alpha-1}}{(1-t)^\alpha}dt=\frac{\pi}{\sin\alpha\pi}.
    \end{align*}
    Thus we calculated the lower bound for the norm.
\end{proof}
\section{Norm estimates of the Hilbert matrix $\|\mathcal{H}\|_{\mathcal{B}^\alpha\rightarrow \mathcal{B}^\alpha}$}
 In this section, we explore the norm estimates for the Hilbert matrix operator $\mathcal{H}$ acting on $\mathcal{B}^\alpha$ for $1<\alpha<2$. For convenience, we set
  $$\|f\|_{\alpha*}=\sup_{z\in\mathbb{D}}(1-|z|^2)^{\alpha}|f'(z)|.$$

  According to (\ref{eq2.1}), we obtain that
\begin{align}\label{eq5.1}
	(\mathcal{H}f)'(z)=\int_{0}^{1}\frac{t}{(1-tz)^2}f(t)dt.
\end{align}
For $z\in\mathbb{D}$, we can choose the path in \cite{1}$$\zeta(t)=\zeta_z(t)=\frac{t}{(t-1)z+1}.$$
The change of variable in (\ref{eq5.1}) gives
\begin{align}\label{eq5.2}
	(\mathcal{H}f)'(z)=\int_{0}^{1}\frac{t}{\left[(t-1)z+1\right](1-z)}f(\phi_t(z))dt.
\end{align}
 \begin{lemma}
 Let $f\in\mathcal{B}^\alpha$, then
  \begin{align*}
	|f(z)|\leq
	\begin{cases}
		&\frac{(1-|z|)^{1-\alpha}-1}{\alpha-1}\|f\|_{\alpha*}+|f(0)|, if  \alpha\not=1,\\
		&\log\frac{1}{1-|z|}\|f\|_{\alpha*}+|f(0)|,  if  \alpha =1.\\
	\end{cases}
\end{align*}

 \end{lemma}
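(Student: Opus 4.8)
The plan is to obtain the estimate by integrating $f'$ along the radial segment from $0$ to $z$ and then inserting the defining bound of $\|f\|_{\alpha*}$. Concretely, for a fixed $z\in\mathbb{D}$ I would write
$$f(z)-f(0)=\int_0^1 \frac{d}{ds}f(sz)\,ds=\int_0^1 f'(sz)\,z\,ds,$$
so that $|f(z)|\le |f(0)|+|z|\int_0^1 |f'(sz)|\,ds$. Since $|sz|=s|z|$ along this path, the definition of $\|f\|_{\alpha*}$ gives $|f'(sz)|\le \|f\|_{\alpha*}(1-s^2|z|^2)^{-\alpha}$, and hence
$$|f(z)|\le |f(0)|+\|f\|_{\alpha*}\,|z|\int_0^1 \frac{ds}{(1-s^2|z|^2)^\alpha}.$$

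The next step is the substitution $u=s|z|$, which turns the remaining integral into $\int_0^{|z|}(1-u^2)^{-\alpha}\,du$. To reach the stated closed form I would replace the factor $1-u^2$ by $1-u$: on $[0,1)$ one has $1-u^2=(1-u)(1+u)\ge 1-u$, and because $\alpha>0$ this yields $(1-u^2)^{-\alpha}\le (1-u)^{-\alpha}$, so
$$\int_0^{|z|}\frac{du}{(1-u^2)^\alpha}\le \int_0^{|z|}\frac{du}{(1-u)^\alpha}.$$
This last integral is elementary. For $\alpha\ne 1$ an antiderivative of $(1-u)^{-\alpha}$ is $(1-u)^{1-\alpha}/(\alpha-1)$, which gives $\frac{(1-|z|)^{1-\alpha}-1}{\alpha-1}$; for $\alpha=1$ an antiderivative is $-\log(1-u)$, which gives $\log\frac{1}{1-|z|}$. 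Substituting these back into the displayed bound reproduces exactly the two cases of the lemma.

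The argument is essentially routine, so there is no serious obstacle; the only point requiring a little care is the passage from $1-u^2$ to $1-u$, which is precisely what converts the otherwise incomplete-beta-type integral into the clean expression in the statement, together with keeping the sign of $\alpha-1$ straight when evaluating the antiderivative in the two regimes $\alpha>1$ and $0<\alpha<1$ so that the quotient $\frac{(1-|z|)^{1-\alpha}-1}{\alpha-1}$ comes out nonnegative in both.
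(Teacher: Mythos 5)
Your argument is correct and is essentially the same as the paper's: both integrate $f'$ along the radial segment, substitute $u=s|z|$, use $1-u^2\ge 1-u$ to replace $(1-u^2)^{-\alpha}$ by $(1-u)^{-\alpha}$, and evaluate the resulting elementary integral in the two cases $\alpha\ne 1$ and $\alpha=1$. No issues.
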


 \begin{proof}
 	Suppose $f\in\mathcal{B}^\alpha$ and $z\in\mathbb{D}$, then
 	\begin{align*}
 		|f(z)-f(zt)|&=\left|z\int_{t}^{1}f'(tz)dt\right|\\
 		&\leq\|f\|_{\alpha*}\int_{t}^{1}\frac{|z|}{(1-|tz|^2)^\alpha}dt\\
 		&\leq\|f\|_{\alpha*}\int_{|z|t}^{|z|}\frac{1}{(1-x)^\alpha}dx.
 	\end{align*}
 	Especially, let $t=0$, we have $|f(z)-f(0)|\leq\|f\|_{\alpha*}\int_{0}^{|z|}\frac{1}{(1-x)^\alpha}dx.$
 	By a simple calculation, we have obtained the result.
 \end{proof}
  \begin{theorem}
 	For $1<\alpha<2$, we obtain that
 	$$\|\mathcal{H}\|_{\mathcal{B}^\alpha\rightarrow \mathcal{B}^\alpha}\geq\int_{0}^{1}\frac{(1-t^2)^{1-\alpha}}{2(\alpha-1)}dt-\frac{1}{2(\alpha-1)}+\frac{\pi}{\sin\pi(\alpha-1)},$$
 	Moreover,  $\mathcal{H}$ is not bounded on $\mathcal{B}^\alpha$ when $0<\alpha\leq1$, or $\alpha\geq2$.
 \end{theorem}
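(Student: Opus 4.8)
The plan is to prove the lower bound for $1<\alpha<2$ with a single unit-norm test function, and to establish unboundedness in each of the regimes $0<\alpha<1$, $\alpha=1$, and $\alpha\ge2$ by producing a concrete $f\in\mathcal{B}^\alpha$ whose image $\mathcal{H}f$ escapes $\mathcal{B}^\alpha$. For the lower bound I would take $f(z)=\frac{(1-z^2)^{1-\alpha}-1}{2(\alpha-1)}$, which is analytic with $f(0)=0$ and $f'(z)=\frac{z}{(1-z^2)^\alpha}$. Because $|1-z^2|\ge 1-|z|^2$ one has $(1-|z|^2)^\alpha|f'(z)|\le|z|<1$, with the value tending to $1$ as $z\to1$ along the reals, so $\|f\|_{\mathcal{B}^\alpha}=1$ and hence $\|\mathcal{H}\|\ge\|\mathcal{H}f\|_{\mathcal{B}^\alpha}=|\mathcal{H}f(0)|+\sup_{z}(1-|z|^2)^\alpha|(\mathcal{H}f)'(z)|$. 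The constant term is immediate from \eqref{eq2.1}: since $f\ge0$ on $[0,1)$, $|\mathcal{H}f(0)|=\int_0^1 f(t)\,dt=\int_0^1\frac{(1-t^2)^{1-\alpha}}{2(\alpha-1)}dt-\frac{1}{2(\alpha-1)}$, which is exactly the first two terms and is finite precisely because $\alpha<2$.

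The heart of the matter is the derivative term, which I would compute along the real axis using the change of variables \eqref{eq5.2}. Writing $(1-r^2)^\alpha(\mathcal{H}f)'(r)=\int_0^1 g_r(t)\,dt$ with $g_r(t)=(1-r)^{\alpha-1}(1+r)^\alpha\frac{t}{D}\,f(t/D)$ and $D=1-(1-t)r$, the key algebraic fact is $1-(t/D)^2=\frac{(1-t)(1-r)[(1+t)-(1-t)r]}{D^2}$, so the factor $(1-r)^{1-\alpha}$ produced inside $f(t/D)$ cancels the prefactor $(1-r)^{\alpha-1}$. Consequently $g_r(t)\to\frac{1}{\alpha-1}\frac{t^{\alpha-1}}{(1-t)^{\alpha-1}}$ pointwise as $r\to1$. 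As each $g_r\ge0$, Fatou's lemma gives $\sup_z(1-|z|^2)^\alpha|(\mathcal{H}f)'(z)|\ge\liminf_{r\to1}\int_0^1 g_r\ge\frac{1}{\alpha-1}\int_0^1\frac{t^{\alpha-1}}{(1-t)^{\alpha-1}}dt=\frac{1}{\alpha-1}B(\alpha,2-\alpha)$, and the reflection formula together with $\sin\pi(\alpha-1)=-\sin\pi\alpha$ rewrites this as $\frac{\pi}{\sin\pi(\alpha-1)}$, the third term; adding the two contributions yields the claimed bound. I expect this exponent bookkeeping — verifying the precise cancellation of $(1-r)$-powers and identifying the limiting integrand — to be the main obstacle, while the use of Fatou (legitimate since the integrand is nonnegative for $1<\alpha<2$) is what removes any uniform-integrability issue and keeps the estimate to a clean lower bound.

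For unboundedness I would treat three cases. If $0<\alpha<1$, take $f\equiv1\in\mathcal{B}^\alpha$; from \eqref{eq5.1}, $(\mathcal{H}1)'(r)=\int_0^1\frac{t}{(1-tr)^2}dt\gtrsim\frac{1}{1-r}$, so $(1-r^2)^\alpha|(\mathcal{H}1)'(r)|\gtrsim(1-r)^{\alpha-1}\to\infty$ and $\mathcal{H}1\notin\mathcal{B}^\alpha$. If $\alpha=1$, take $f(z)=\log\frac{1}{1-z}\in\mathcal{B}$; restricting the integral for $(\mathcal{H}f)'(r)$ to $t\in[r,1)$ and using $\int_r^1\frac{dt}{(1-tr)^2}=\frac{1}{1-r^2}$ gives $(1-r^2)|(\mathcal{H}f)'(r)|\ge r\log\frac{1}{1-r}\to\infty$, so $\mathcal{H}f\notin\mathcal{B}$. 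If $\alpha\ge2$, I reuse $f(z)=\frac{(1-z^2)^{1-\alpha}-1}{2(\alpha-1)}\in\mathcal{B}^\alpha$ and note that $\mathcal{H}f(0)=\int_0^1 f(t)\,dt$ diverges, since $\int_0^1(1-t^2)^{1-\alpha}dt=\infty$ once $\alpha\ge2$, so $\mathcal{H}f$ is not even a well-defined analytic function. In every case an admissible $f$ with $\mathcal{H}f\notin\mathcal{B}^\alpha$ rules out boundedness.
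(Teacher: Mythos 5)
Your proposal is correct, and for the main inequality it is essentially the paper's own argument: the same unit-norm test function $h_\alpha(z)=\frac{(1-z^2)^{1-\alpha}-1}{2(\alpha-1)}$, the same use of \eqref{eq5.2} along the real axis, the same cancellation of the $(1-r)$-powers, and the same identification of the limit with $\frac{1}{\alpha-1}B(\alpha,2-\alpha)=\frac{\pi}{\sin\pi(\alpha-1)}$. You differ from the paper only in two minor (and in both cases cleaner) respects: you justify passing the limit $r\to1^-$ inside the integral via Fatou's lemma after checking $g_r\ge0$, whereas the paper simply asserts the limit of the integral equals the integral of the limit; and for the unboundedness when $0<\alpha<1$ you use $f\equiv1$ with $(1-r^2)^\alpha|(\mathcal{H}1)'(r)|\asymp(1-r)^{\alpha-1}\to\infty$, whereas the paper keeps $h_\alpha$ and argues that a difference of two terms blows up, which requires (but does not supply) a check that the second term does not cancel the divergence of the first. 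Your treatments of $\alpha=1$ (same test function $\log\frac{1}{1-z}$, with a more explicit lower estimate) and of $\alpha\ge2$ (divergence of $\mathcal{H}h_\alpha(0)$) match the paper's.
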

 \begin{proof}
 	Let $\alpha\not=1$ and $z\in\mathbb{D}$. Define$$h_\alpha(z)=\frac{1}{2(\alpha-1)}(1-z^2)^{1-\alpha}-\frac{1}{2(\alpha-1)}$$
 	On one hand, we have the estimate
 	$$\|h_\alpha\|_{\mathcal{B}^\alpha}=\sup_{z\in\mathbb{D}}\frac{|z|(1-|z|^2)^\alpha}{|1-z^2|^\alpha}\leq\sup_{z\in\mathbb{D}}|z|=1$$
 	On the other hand, for $r\in(0,1)$ it holds that $\lim_{r\rightarrow1^-}|h'_\alpha(r)|(1-r^2)^\alpha=1$,  and we obtain that $\|h_\alpha\|_{\mathcal{B}^\alpha}=1.$
 	
 	According to (\ref{eq5.2}), we obtain that
 	
 	\begin{align*}
 		\|\mathcal{H}\|_{\mathcal{B}^\alpha\rightarrow\mathcal{B}^\alpha}&\geq\frac{\|\mathcal{H}h_\alpha\|_{\mathcal{B}^\alpha}}{\|h_\alpha\|_{\mathcal{B}^\alpha}}\\
 		&=|\mathcal{H}h_{\alpha}(0)|+\sup_{z\in\mathbb{D}}|(\mathcal{H}h_{\alpha})'(z)|(1-|z|^2)^\alpha\\
 		&=\frac{1}{2(\alpha-1)}\left(\int_{0}^{1}(1-t^2)^{1-\alpha}dt-1\right)+\frac{1}{2|\alpha-1|}\sup_{z\in\mathbb{D}}(1-|z|^2)^\alpha\left|\int_{0}^{1}\frac{t(1-\phi_t(z)^2)^{1-\alpha}-t}{\left[(t-1)z+1\right](1-z)}dt\right|\\
 		&=\int_{0}^{1}\frac{1}{2(\alpha-1)}(1-t^2)^{1-\alpha}dt-\frac{1}{2(\alpha-1)}\\
 		&+\frac{1}{2|\alpha-1|}\sup_{z\in\mathbb{D}}(1-|z|^2)^\alpha\left|\int_{0}^{1}\frac{t(1-t)^{1-\alpha}\left[(t-1)z+1+t\right]^{1-\alpha}}{(1-z)^\alpha\left[(t-1)z+1\right]^{3-2\alpha}}-\frac{t}{\left[(t-1)z+1\right](1-z)}dt\right|.
 	\end{align*}
    For $0<\alpha<1$, we have that
    \begin{align*}
    	&\sup_{z\in\mathbb{D}}(1-|z|^2)^\alpha\left|\int_{0}^{1}\frac{t(1-t)^{1-\alpha}\left[(t-1)z+1+t\right]^{1-\alpha}}{(1-z)^\alpha\left[(t-1)z+1\right]^{3-2\alpha}}-\frac{t}{\left[(t-1)z+1\right](1-z)}dt\right|\\
    	&\geq\lim\limits_{r\rightarrow1^-}(1-r^2)^\alpha\left|\int_{0}^{1}\frac{t}{\left[(t-1)r+1\right](1-r)}-\frac{t(1-t)^{1-\alpha}\left[(t-1)r+1+t\right]^{1-\alpha}}{(1-r)^\alpha\left[(t-1)r+1\right]^{3-2\alpha}}dt\right|\\
    	&=\infty.
    \end{align*}
 	Then $\mathcal{H}$  is not bounded operator on  $\mathcal{B}^\alpha$ for $0<\alpha<1$.

 	For $\alpha>1$, we have that
 	\begin{align*}
 		\|\mathcal{H}\|_{\mathcal{B}^\alpha\rightarrow\mathcal{B}^\alpha}
 		&\geq\int_{0}^{1}\frac{1}{2(\alpha-1)}(1-t^2)^{1-\alpha}dt-\frac{1}{2(\alpha-1)}\\
 		&+\frac{1}{2(\alpha-1)}\lim\limits_{r\rightarrow1^-}(1-r^2)^\alpha\left|\int_{0}^{1}\frac{t}{\left[(t-1)r+1\right](1-r)}-\frac{t(1-t)^{1-\alpha}\left[(t-1)r+1+t\right]^{1-\alpha}}{(1-r)^\alpha\left[(t-1)r+1\right]^{3-2\alpha}}dt \right|	\\
 		&=\int_{0}^{1}\frac{1}{2(\alpha-1)}(1-t^2)^{1-\alpha}dt-\frac{1}{2(\alpha-1)}+\frac{1}{\alpha-1}\int_{0}^{1}\frac{(1-t)^{1-\alpha}}{t^{1-\alpha}}dt.
 	\end{align*}
 	 It is evident that $\mathcal{H}$ cannot  map $\mathcal{B}^\alpha$ to $\mathcal{B}^\alpha$ for $\alpha\geq2$.

 For $1<\alpha<2$, we obtain that
 	\begin{align*}
 		 \|\mathcal{H}\|_{\mathcal{B}^\alpha\rightarrow\mathcal{B}^\alpha}&\geq\int_{0}^{1}\frac{(1-t^2)^{1-\alpha}}{2(\alpha-1)}dt-\frac{1}{2(\alpha-1)}+\frac{1}{\alpha-1}B(2-\alpha,\alpha)\\
 		 &=\int_{0}^{1}\frac{(1-t^2)^{1-\alpha}}{2(\alpha-1)}dt-\frac{1}{2(\alpha-1)}+\frac{\Gamma(2-\alpha)\Gamma(\alpha-1)}{\Gamma(2)}\\
 		 &=\int_{0}^{1}\frac{(1-t^2)^{1-\alpha}}{2(\alpha-1)}dt -\frac{1}{2(\alpha-1)}+\frac{\pi}{\sin\pi(\alpha-1)}.
 	\end{align*}
 	
 	Last, we consider the condition for $\alpha=1$. Let $h_1(z)=\log\frac{1}{1-z}$, we know that $f_1\in\mathcal{B}$ and we obtain that
 	\begin{align*}
 		\|\mathcal{H}f_1(z)\|_{\mathcal{B}^1}&\geq\sup_{z\in\mathbb{D}}(1-|z|^2)\left|\int_{0}^{1}\frac{t}{\left[1+(t-1)z\right](1-z)}\log\frac{1+(t-1)z}{(1-t)(1-z)}dt\right|\\
 		&\geq\sup_{0\leq r<1}(1+r)\int_{0}^{1}\frac{t}{1+(t-1)r}\log\frac{1+(t-1)r}{(1-t)(1-r)}dt\\
 		&\geq\lim\limits_{r\rightarrow1^-}(1+r)\int_{0}^{1}\frac{t}{1+(t-1)r}\log\frac{1+(t-1)r}{(1-t)(1-r)}dt=\infty,
 	\end{align*}
 	This shows that $\mathcal{H}$ cannot  map $\mathcal{B}^1$ to $\mathcal{B}^1$. This finishes the proof of the theorem.
  \end{proof}
  Next, we consider the upper bound of the norm estimate for Hilbert matrix operator acting on $\mathcal{B}^\alpha$ for $1<\alpha<2$.
  \begin{theorem}
  	For $1<\alpha<2$, the norm of the Hilbert matrix operator acting on $\mathcal{B}^\alpha$ satisfies $$\|\mathcal{H}\|_{\mathcal{B}^\alpha\rightarrow\mathcal{B}^\alpha}\leq\frac{2^{\alpha}\pi}{\sin(\alpha-1)\pi}+\frac{1}{2-\alpha}.$$
  \end{theorem}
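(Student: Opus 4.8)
The plan is to split the $\mathcal{B}^\alpha$-norm of $\mathcal{H}f$ into its two defining pieces, $\|\mathcal{H}f\|_{\mathcal{B}^\alpha}=|\mathcal{H}f(0)|+\sup_{z\in\mathbb{D}}(1-|z|^2)^\alpha|(\mathcal{H}f)'(z)|$, and to show that the first piece is controlled by $\frac{1}{2-\alpha}\|f\|_{\mathcal{B}^\alpha}$ and the second by $\frac{2^\alpha\pi}{\sin(\alpha-1)\pi}\|f\|_{\mathcal{B}^\alpha}$; adding these yields the claimed constant. Throughout I would use the pointwise growth estimate from the Lemma in two convenient forms: the tight form $|f(w)|\le\frac{(1-|w|)^{1-\alpha}-1}{\alpha-1}\|f\|_{\alpha*}+|f(0)|$, and, since $(1-|w|)^{1-\alpha}\ge1$ and $\frac{1}{\alpha-1}>1$ for $1<\alpha<2$, its cleaner consequence $|f(w)|\le\frac{(1-|w|)^{1-\alpha}}{\alpha-1}\|f\|_{\mathcal{B}^\alpha}$.

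For the value at the origin, note that $\mathcal{H}f(0)=\int_0^1 f(t)\,dt$, so the tight growth estimate gives $|\mathcal{H}f(0)|\le\frac{\|f\|_{\alpha*}}{\alpha-1}\int_0^1[(1-t)^{1-\alpha}-1]\,dt+|f(0)|$. Since $\int_0^1(1-t)^{1-\alpha}\,dt=\frac{1}{2-\alpha}$, the first integral collapses to $\frac{1}{2-\alpha}$, and using $\alpha\ge1$ (so $\frac{1}{2-\alpha}\ge1$) one obtains $|\mathcal{H}f(0)|\le\frac{\|f\|_{\alpha*}}{2-\alpha}+|f(0)|\le\frac{1}{2-\alpha}\|f\|_{\mathcal{B}^\alpha}$. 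This produces exactly the $\frac{1}{2-\alpha}$ term.

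For the seminorm piece I would start from the representation (\ref{eq5.2}), apply the triangle inequality under the integral sign and the clean growth bound to $f(\phi_t(z))$, and then reduce to the real axis: writing $r=|z|$ and using $|1-(1-t)z|\ge1-(1-t)r$, $|1-z|\ge1-r$, and $|\phi_t(z)|\le\phi_t(r)$ (so that $(1-|\phi_t(z)|)^{1-\alpha}\le(1-\phi_t(r))^{1-\alpha}$ because $1-\alpha<0$). After substituting $1-\phi_t(r)=\frac{(1-t)(1-r)}{1-(1-t)r}$ and cancelling the surviving powers of $1-r$, the estimate becomes $\frac{\|f\|_{\mathcal{B}^\alpha}}{\alpha-1}\int_0^1\frac{t(1+r)^\alpha(1-t)^{1-\alpha}}{(1-(1-t)r)^{2-\alpha}}\,dt$. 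The integrand is increasing in $r$ for each fixed $t$ (both $(1+r)^\alpha$ and $(1-(1-t)r)^{-(2-\alpha)}$ increase), so by monotone convergence the supremum over $r$ equals the $r\to1^-$ limit, whose integrand is $2^\alpha t^{\alpha-1}(1-t)^{1-\alpha}$; evaluating gives $\frac{2^\alpha}{\alpha-1}B(\alpha,2-\alpha)=2^\alpha\Gamma(\alpha-1)\Gamma(2-\alpha)=\frac{2^\alpha\pi}{\sin(\alpha-1)\pi}$.

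The main obstacle I anticipate is precisely this reduction to the real axis together with the monotonicity/limit interchange: one must verify that every $z$-dependent geometric factor is dominated by its value at $z=r$ (the denominator $|1-z|$ being the delicate one, handled by $|1-z|\ge1-r$), and that the resulting one-variable integral is genuinely increasing in $r$ so that its supremum is the boundary limit, with the limiting integrand $2^\alpha t^{\alpha-1}(1-t)^{1-\alpha}$ being integrable on $(0,1)$ exactly because $1<\alpha<2$. A secondary but essential point is bookkeeping: one should use the tight growth form for $\mathcal{H}f(0)$ and the cleaner form for the derivative, since using the cleaner bound at the origin would inflate the first term to $\frac{1}{(\alpha-1)(2-\alpha)}$ and overshoot the stated constant.
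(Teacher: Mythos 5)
Your proposal is correct and follows essentially the same route as the paper: the same decomposition of $\|\mathcal{H}f\|_{\mathcal{B}^\alpha}$, the same growth lemma, the same reduction of the derivative term to the real axis via $|1-z|\ge 1-r$ and $|\phi_t(z)|\le\phi_t(r)$, and the same limiting Beta integral $2^\alpha\Gamma(\alpha-1)\Gamma(2-\alpha)=\frac{2^\alpha\pi}{\sin(\alpha-1)\pi}$. The only difference is cosmetic: you absorb $|f(0)|$ into the single clean bound $|f(w)|\le\frac{(1-|w|)^{1-\alpha}}{\alpha-1}\|f\|_{\mathcal{B}^\alpha}$, whereas the paper carries the $|f(0)|$ contribution separately and checks at the end that its coefficient is also dominated by the stated constant.
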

  \begin{proof}
  	Let $f\in\mathcal{B}^\alpha$ for $1<\alpha<2$. According to (\ref{eq2.1}), (\ref{eq5.1}) and Lemma 6.1,  we  obtain that
  	\begin{align*}
  		\|\mathcal{H}f\|_{\mathcal{B}^\alpha}&=|\mathcal{H}f(0)|+\sup_{z\in\mathbb{D}}|(\mathcal{H}f)'(z)|(1-|z|^2)^\alpha\\
  		&=\left|\int_{0}^{1}f(t)dt\right|+\sup_{z\in\mathbb{D}}(1-|z|^2)^\alpha\left|\int_{0}^{1}\frac{\phi_t(z)}{1-z}f(\phi_t(z))dt\right|\\
  		&\leq\int_{0}^{1}|f(t)|dt+\sup_{z\in\mathbb{D}}(1-|z|^2)^\alpha\int_{0}^{1}\frac{|\phi_t(z)|}{1-|z|}|f(\phi_t(z))|dt\\
  		&\leq\int_{0}^{1}\left(\frac{(1-t)^{1-\alpha}-1}{\alpha-1}\|f\|_{\alpha*}+|f(0)|\right)dt\\
  		&+\sup_{z\in\mathbb{D}}(1+|z|)^\alpha(1-|z|)^{\alpha-1}\int_{0}^{1}|\phi_(z)|\left[\frac{(1-\phi_t(z))^{1-\alpha}-1}{\alpha-1}\|f\|_{\alpha*}+|f(0)|\right]dt.
  	\end{align*}
  	
  	Noting that $|\phi_t(z)|=|\frac{t}{1+(t-1)z}|\leq\frac{t}{1+(t-1)|z|}$, we  get that
  	  	\begin{align*}
  		\|\mathcal{H}f\|_{\mathcal{B}^\alpha}&\leq\frac{1}{2-\alpha}\|f\|_{\alpha*}+|f(0)|\\
  		&+\sup_{z\in\mathbb{D}}(1+|z|)^\alpha(1-|z|)^{\alpha-1}\int_{0}^{1}\frac{t}{1+(t-1)|z|}\left(\frac{(1-t)^{1-\alpha}(1-|z|)^{1-\alpha}}{(\alpha-1)(1+(t-1)|z|)^{1-\alpha}}-\frac{1}{\alpha-1}\right)\|f\|_{\alpha*}dt.\\
  		&+\sup_{z\in\mathbb{D}}(1+|z|)^\alpha(1-|z|)^{\alpha-1}\int_{0}^{1}\frac{t}{1+(t-1)|z|}|f(0)|dt
  	\end{align*}
  	\begin{align*}
  		&\leq\frac{1}{2-\alpha}\|f\|_{\alpha*}+|f(0)|\\
  		&+\sup_{0\leq r<1}(1+r)^\alpha\int_{0}^{1}\frac{t}{1+(t-1)r}\left(\frac{(1-t)^{1-\alpha}}{(\alpha-1)(1+(t-1)r)^{1-\alpha}}-\frac{(1-r)^{\alpha-1}}{\alpha-1}\right)\|f\|_{\alpha*}dt.\\
  		&+\sup_{0\leq r<1}(1+r)^\alpha(1-r)^{\alpha-1}\int_{0}^{1}\frac{t}{1+(t-1)r}|f(0)|dt.
  	\end{align*}
It follows  that
 \begin{align*}
	&\sup_{0\leq r<1}(1+r)^\alpha\int_{0}^{1}\frac{t}{1+(t-1)r}\left(\frac{(1-t)^{1-\alpha}}{(\alpha-1)(1+(t-1)r)^{1-\alpha}}-\frac{(1-r)^{\alpha-1}}{\alpha-1}\right)dt\\
	&\leq\sup_{0\leq r<1}(1+r)^\alpha\int_{0}^{1}\frac{t}{1+(t-1)r}\cdot\frac{(1-t)^{1-\alpha}}{(\alpha-1)(1+(t-1)r)^{1-\alpha}}dt\\
	&-\inf_{0\leq r<1}(1+r)^\alpha\int_{0}^{1}\frac{t}{1+(t-1)r}\cdot\frac{(1-r)^{\alpha-1}}{\alpha-1}dt\\
&=\sup_{0\leq r<1}(1+r)^\alpha\int_{0}^{1}\frac{t(1-t)^{1-\alpha}}{(\alpha-1)(1+(t-1)r)^{2-\alpha}}dt-0\\
	&\leq 2^\alpha\int_{0}^{1}\frac{t(1-t)^{1-\alpha}}{(\alpha-1)(1-(1-t))^{2-\alpha}}dt=\frac{2^{\alpha}}{\alpha-1}
\int_{0}^{1}\frac{(1-t)^{1-\alpha}}{t^{1-\alpha}}dt=\frac{2^{\alpha}\pi}{\sin(\alpha-1)\pi}.
\end{align*}
Then
\begin{align*}
	\|\mathcal{H}f\|_{\mathcal{B}^\alpha}&\leq\left(\frac{2^{\alpha}\pi}{\sin(\alpha-1)\pi}+\frac{1}{2-\alpha}\right)\|f\|_{\alpha*}\\
	&+\left(\sup_{0\leq r<1}(1+r)^\alpha(1-r)^{\alpha-1}\int_{0}^{1}\frac{t}{1+(t-1)r}dt+1\right)|f(0)|.
\end{align*}
In another way, we can get that
\begin{align*}
	\sup_{0\leq r<1}(1+r)^\alpha(1-r)^{\alpha-1}\int_{0}^{1}\frac{t}{1+(t-1)r}dt\leq2^\alpha\int_{0}^{1}dt=2^\alpha<\frac{2^{\alpha}\pi}{\sin(\alpha-1)\pi}
\end{align*}
Therefore, we can get that$$\|\mathcal{H}f\|_{\mathcal{B}^\alpha\rightarrow\mathcal{B}^\alpha}\leq\frac{2^{\alpha}\pi}{\sin(\alpha-1)\pi}+\frac{1}{2-\alpha}.$$We complete the proof.
\end{proof}

\section{Norm of the Hilbert matrix $\|\mathcal{H}\|_{H^\infty\rightarrow \mathcal{B}}$}
In \cite{6}, it is established that the Hilbert matrix operator $\mathcal{H}$ is not bounded on $H^\infty$. However, it maps $H^\infty$ into the Bloch space $\mathcal{B}$. In this section, we compute the exact value of the norm of the Hilbert matrix from $H^\infty$ into $\mathcal{B}$.

\begin{theorem}
	The norm of the Hilbert matrix operator acting from $H^\infty$ into $\mathcal{B}$ satisfies $$\|\mathcal{H}\|_{H^\infty\rightarrow \mathcal{B}}=3.$$
\end{theorem}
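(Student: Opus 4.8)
The plan is to use the decomposition of the Bloch norm
$$\|\mathcal{H}f\|_{\mathcal{B}}=|\mathcal{H}f(0)|+\sup_{z\in\mathbb{D}}(1-|z|^2)|(\mathcal{H}f)'(z)|,$$
together with the representations $\mathcal{H}f(0)=\int_0^1 f(t)\,dt$ coming from (\ref{eq2.1}) and $(\mathcal{H}f)'(z)=\int_0^1\frac{t}{(1-tz)^2}f(t)\,dt$ from (\ref{eq5.1}), and to bound the two summands separately. I would first establish $\|\mathcal{H}\|_{H^\infty\to\mathcal{B}}\le 3$, and then produce a test function driving the norm up to $3$; the key point is that the constant function saturates both summands simultaneously, which is why the bounds will match.

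For the upper bound, take $f\in H^\infty$ with $\|f\|_{H^\infty}=1$. The constant term satisfies $|\mathcal{H}f(0)|\le\int_0^1|f(t)|\,dt\le 1$. For the seminorm, using $|1-tz|\ge 1-t|z|$ and writing $r=|z|$,
$$(1-|z|^2)|(\mathcal{H}f)'(z)|\le(1-r^2)\int_0^1\frac{t}{(1-tr)^2}\,dt=:\psi(r).$$
A primitive computation gives $\int_0^1\frac{t}{(1-tr)^2}\,dt=\frac{1}{r^2}\left(\log(1-r)+\frac{r}{1-r}\right)$, so that $\psi(r)=1+\frac{1}{r}+\frac{1-r^2}{r^2}\log(1-r)$, and the whole matter reduces to the scalar claim $\sup_{0<r<1}\psi(r)=2$. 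I expect this to be the main obstacle. One checks $\lim_{r\to1^-}\psi(r)=2$ via $(1-r)\log(1-r)\to0$, and proves $\psi(r)<2$ on $(0,1)$; after clearing the positive factor $\frac{1-r}{r}$ this is equivalent to the clean inequality $\log\frac{1}{1-r}>\frac{r}{1+r}$. The latter follows by setting $\eta(r)=-\log(1-r)-\frac{r}{1+r}$, noting $\eta(0)=0$ and $\eta'(r)=\frac{1}{1-r}-\frac{1}{(1+r)^2}>0$. Combining the two estimates gives $\|\mathcal{H}f\|_{\mathcal{B}}\le 1+2=3$, hence $\|\mathcal{H}\|_{H^\infty\to\mathcal{B}}\le 3$.

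For the lower bound I would test against $f\equiv 1$, where $\|f\|_{H^\infty}=1$ and, by (\ref{eq2.1}), $\mathcal{H}f(z)=-\frac{1}{z}\log(1-z)$ with $\mathcal{H}f(0)=1$. On the real axis $(1-r^2)(\mathcal{H}f)'(r)=\psi(r)\to 2$ as $r\to1^-$, so $\sup_{z\in\mathbb{D}}(1-|z|^2)|(\mathcal{H}f)'(z)|\ge 2$ and therefore $\|\mathcal{H}f\|_{\mathcal{B}}\ge 1+2=3$. Combined with the upper bound this yields $\|\mathcal{H}\|_{H^\infty\to\mathcal{B}}=3$. The two bounds coincide precisely because $f\equiv1$ forces $|\mathcal{H}f(0)|=1$ and the seminorm to equal $2$ at the same time; since both directions hinge on the single function $\psi$, I would isolate the estimate $\psi(r)<2=\lim_{r\to1^-}\psi(r)$ as a preliminary scalar lemma and invoke it in each step.
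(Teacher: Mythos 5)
Your proposal is correct and follows essentially the same route as the paper: the same decomposition $\|\mathcal{H}f\|_{\mathcal{B}}=|\mathcal{H}f(0)|+\sup_z(1-|z|^2)|(\mathcal{H}f)'(z)|$, the same bound $|1-tz|\ge 1-t|z|$ for the upper estimate, and the same test function $f\equiv 1$ with $r\to 1^-$ for the lower bound. The only difference is that the paper bounds $t\le 1$ in the numerator so that $(1-r^2)\int_0^1(1-tr)^{-2}\,dt=1+r\le 2$ falls out immediately, whereas you keep the factor $t$ and must prove the (correct) scalar inequality $\psi(r)<2$ via $\log\frac{1}{1-r}>\frac{r}{1+r}$; both work, and the paper's shortcut merely spares you that lemma.
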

\begin{proof}
	First, let's consider the upper bound of  $\|\mathcal{H}\|_{H^\infty\rightarrow \mathcal{B}}$. Suppose that $f\in H^\infty$, by (\ref{eq2.1}), we have that
	\begin{align*}
		\|\mathcal{H}f\|_\mathcal{B}&=|\int_{0}^{1}f(t)dt|+\sup_{z\in\mathbb{D}}(1-|z|^2)|(\mathcal{H}f)'(z)|\\                       	
		&= |\int_{0}^{1}f(t)dt|+\sup_{z\in\mathbb{D}}(1-|z|^2)|\int_{0}^{1}\frac{f(t)t}{(1-tz)^2}dt|\\
		&\leq\int_{0}^{1}|f(t)|dt+\sup_{z\in\mathbb{D}}(1-|z|^2)\int_{0}^{1}\frac{|f(t)t|}{(1-t|z|)^2}dt\\
		&=\int_{0}^{1}|f(t)|dt+\sup_{0\leq r<1}(1-r^2)\int_{0}^{1}\frac{|f(t)t|}{(1-tr)^2}dt\\
		&\leq \left(1+\sup_{0\leq r<1}(1-r^2)\int_{0}^{1}\frac{1}{(1-tr)^2}dt\right)\|f\|_{H^\infty}\\
		&=\left(1+\sup_{0\leq r<1}\frac{1-r^2}{r}\int_{0}^{r}\frac{1}{(1-t)^2}dt\right)\|f\|_{H^\infty}\\
		&=3\|f\|_{H^\infty}.
	\end{align*}
Therefore, we have obtained the upper bound.  Now  we consider the lower bound of $\|\mathcal{H}\|_{H^\infty\rightarrow \mathcal{B}}$. According to \cite{1}, let $f=1\in H^\infty$, then $$\|f\|_{H^\infty}=1,\mathcal{H}(1)(z)=\frac{1}{z}\log\frac{1}{1-z},\mathcal{H}(1)(0)=1.$$
We have that
\begin{align*}
	\|\mathcal{H}\|_{H^\infty\rightarrow \mathcal{B}}&\geq \mathcal{H}(1)(0)+\sup_{z\in\mathbb{D}}(1-|z|^2)|\mathcal{H}(1)(z)|\\
		&=1+\sup_{z\in\mathbb{D}}(1-|z|^2)\big|\frac{1}{z(1-z)}-\frac{1}{z^2}\log\frac{1}{1-z} \big|\\
	&\geq1+\sup_{0\leq r<1}(1-r^2)\big|\frac{1}{r(1-r)}-\frac{1}{r^2}\log\frac{1}{1-r}\big|\\
	&=1+\sup_{0\leq r<1}\big|\frac{1+r}{r}-\frac{1-r^2}{r^2}\log\frac{1}{1-r}\big|\\
	&\geq 1+\lim\limits_{r\rightarrow1}(\frac{r+1}{r}-\frac{1-r^2}{r^2}\log\frac{1}{1-r})\\
	&=3.
\end{align*}

We obtain the result by estimating the upper and lower bounds.
\end{proof}
\section{Norm estimates of the Hilbert matrix $\|\mathcal{H}\|_{H^\infty_\alpha\rightarrow \mathcal{B}^{\alpha+1}}$}
 In this section,  we delve into the norm estimates for the Hilbert matrix operator $\mathcal{H}$ as it acts from the Korenblum space $H^\infty_\alpha$ into the $(\alpha+1)$-Bloch space $\mathcal{B}^{\alpha+1}$ for $0 <\alpha<1$.


\begin{theorem}
	Let $0<\alpha\leq\frac{2}{3}$. Then
	$$\|\mathcal{H}\|_{H_\alpha^\infty\rightarrow\mathcal{B}^{\alpha+1}}=\int_{0}^{1}\frac{1}{(1-t^2)^\alpha}dt+\frac{2\alpha\pi}{\sin\alpha\pi}$$
	
For $\frac{2}{3}<\alpha<1$, we have the following lower and upper bound: $$\int_{0}^{1}\frac{1}{(1-t^2)^\alpha}dt+\frac{2\alpha\pi}{\sin\alpha\pi}\leq\|\mathcal{H}\|_{H_\alpha^\infty\rightarrow\mathcal{B}^{\alpha+1}}\leq\int_{0}^{1}\frac{1}{(1-t^2)^\alpha}dt+2\int_{0}^{1}\frac{(1-t)^{1-\alpha}}{t^\alpha}dt$$

For  $\alpha\geq 1$, then $\mathcal{H}: H^\infty_\alpha\rightarrow \mathcal{B}^{\alpha+1}$ is not a bounded operator.
	
\end{theorem}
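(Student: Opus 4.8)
The plan is to split the target norm as
$\|\mathcal{H}f\|_{\mathcal{B}^{\alpha+1}} = |\mathcal{H}f(0)| + \sup_{z\in\mathbb{D}}(1-|z|^2)^{\alpha+1}|(\mathcal{H}f)'(z)|$
and to treat the two pieces separately. By (\ref{eq2.1}) the constant term is $\mathcal{H}f(0)=\int_0^1 f(t)\,dt$, while (\ref{eq5.1}) and (\ref{eq5.2}) supply the two equivalent forms of $(\mathcal{H}f)'$ that I will exploit. The two summands $\int_0^1(1-t^2)^{-\alpha}dt$ and $\frac{2\alpha\pi}{\sin\alpha\pi}$ in the statement arise precisely from these two pieces.

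For the lower bound (valid for all $0<\alpha<1$) I would test against $f_\alpha(z)=(1-z^2)^{-\alpha}$, which as in Theorem \ref{Th2.1} satisfies $\|f_\alpha\|_{H^\infty_\alpha}=1$. Then $\mathcal{H}f_\alpha(0)=\int_0^1(1-t^2)^{-\alpha}dt$ gives the first term exactly. For the second term I substitute $f_\alpha$ into (\ref{eq5.2}); using $1-\phi_t(z)^2=\frac{(1-t)(1-z)[(t-1)z+1+t]}{[(t-1)z+1]^2}$ one gets the explicit kernel
$$(\mathcal{H}f_\alpha)'(z)=\int_0^1\frac{t[(t-1)z+1]^{2\alpha-1}}{(1-z)^{\alpha+1}(1-t)^\alpha[(t-1)z+1+t]^\alpha}\,dt.$$
Restricting to $z=r\in(0,1)$ and multiplying by $(1-r^2)^{\alpha+1}=(1-r)^{\alpha+1}(1+r)^{\alpha+1}$ cancels the factor $(1-r)^{\alpha+1}$; letting $r\to1^-$ so that $(t-1)r+1\to t$ and $(t-1)r+1+t\to2t$, Fatou's lemma yields
$$\liminf_{r\to1^-}(1-r^2)^{\alpha+1}(\mathcal{H}f_\alpha)'(r)\ge 2\int_0^1\frac{t^\alpha}{(1-t)^\alpha}\,dt=2B(\alpha+1,1-\alpha)=\frac{2\alpha\pi}{\sin\alpha\pi},$$
using $\Gamma(\alpha+1)\Gamma(1-\alpha)=\alpha\Gamma(\alpha)\Gamma(1-\alpha)$ and the reflection formula. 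Adding the two pieces gives the claimed lower bound.

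For the upper bound the constant term is immediate: $|\mathcal{H}f(0)|\le\|f\|_{H^\infty_\alpha}\int_0^1(1-t^2)^{-\alpha}dt$. For the derivative I would write $(\mathcal{H}f)'=\int_0^1 S_tf\,dt$ with the weighted composition operators $S_tf(z)=\frac{t}{[(t-1)z+1](1-z)}f(\phi_t(z))$ coming from (\ref{eq5.2}), so that $\sup_z(1-|z|^2)^{\alpha+1}|(\mathcal{H}f)'(z)|\le\|f\|_{H^\infty_\alpha}\int_0^1 S(t)\,dt$, where
$$S(t)=\sup_{z\in\mathbb{D}}\frac{t(1-|z|^2)^{\alpha+1}|(t-1)z+1|^{2\alpha-1}}{|1-z|\,(|(t-1)z+1|^2-t^2)^\alpha};$$
equivalently, bounding $|1-tz|\ge1-t|z|$ in (\ref{eq5.1}) reduces matters to the radial quantity $\Phi(r)=(1-r^2)^{\alpha+1}\int_0^1\frac{t}{(1-tr)^2(1-t^2)^\alpha}\,dt$, which is $S(t)$'s kernel at $z=r$ integrated in $t$. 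The key observation is that the quantity under the supremum defining $S(t)$ factors as $\frac{t(1-|z|^2)}{|1-z|}$ times exactly the expression whose supremum is computed in Lemma \ref{Le2.1}; as $z=r\to1^-$ the first factor tends to $2t$ and the second to $\frac{t^{\alpha-1}}{(1-t)^\alpha}$, so the radial boundary value of $S(t)$ is $\frac{2t^\alpha}{(1-t)^\alpha}$. For $0<\alpha\le\frac23$ I would show, by locating the maximizer exactly as in the proofs of Lemmas \ref{Le2.1} and \ref{Le2.2} (the case $0<\alpha\le\frac12$ being handled via Schwarz--Pick as there), that this boundary value is the supremum, whence $\int_0^1 S(t)\,dt=\frac{2\alpha\pi}{\sin\alpha\pi}$ and, with the lower bound, the exact value follows. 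For $\frac23<\alpha<1$ the maximizer need no longer be radial, so I would instead estimate $\Phi(r)$ crudely: since $\alpha>\frac12$ gives $(1-(1-t)r)^{2\alpha-1}\le1$, while $(t-1)r+1+t>2t$ and $(1+r)^{\alpha+1}\le2^{\alpha+1}$, one obtains $\Phi(r)\le 2\int_0^1\frac{t^{1-\alpha}}{(1-t)^\alpha}\,dt=2\int_0^1\frac{(1-t)^{1-\alpha}}{t^\alpha}\,dt$ after the substitution $t\mapsto1-t$, which is the stated upper bound.

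Finally, for $\alpha\ge1$ the same $f_\alpha$ still lies in $H^\infty_\alpha$ with norm $1$, but $\mathcal{H}f_\alpha(0)=\int_0^1(1-t^2)^{-\alpha}dt=+\infty$ since the integrand is non-integrable near $t=1$; hence $\mathcal{H}f_\alpha$ is not even a well-defined element of $\mathcal{B}^{\alpha+1}$, so $\mathcal{H}$ cannot be bounded (the derivative contribution $\frac{2\alpha\pi}{\sin\alpha\pi}$ likewise diverges as $\alpha\uparrow1$). I expect the genuine obstacle to be the exact upper bound for $0<\alpha\le\frac23$: proving that the supremum defining $S(t)$ is attained only in the radial boundary limit requires a full maximization over $\mathbb{D}$ of the product of $\frac{t(1-|z|^2)}{|1-z|}$ with the Lemma \ref{Le2.1} expression, and it is exactly here that the threshold $\frac23$ enters, inherited from the case distinction in Lemma \ref{Le2.1}.
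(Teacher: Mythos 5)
Your lower bound, your crude estimate for $\tfrac23<\alpha<1$, and your unboundedness argument for $\alpha\ge1$ all coincide with the paper's proof (same test function $f_\alpha(z)=(1-z^2)^{-\alpha}$, same radial limit $r\to1^-$ producing $2B(1+\alpha,1-\alpha)$, same divergence of $\mathcal{H}f_\alpha(0)$). Where you genuinely diverge is the sharp upper bound for $0<\alpha\le\tfrac23$. The paper never takes a per-$t$ supremum: it bounds $|\phi_t(z)|\le\phi_t(|z|)$ and $|1-z|\ge 1-|z|$ to reduce everything to the radial quantity $(1+r)^{\alpha+1}\int_0^1\frac{(1-t)(1-rt)^{2\alpha-1}}{t^\alpha(2-(1+r)t)^\alpha}\,dt$, and then shows by computing $\partial g/\partial r$ for $g(r,t)=\frac{(1-rt)^{2\alpha-1}}{(2-(1+r)t)^\alpha}$ that the integrand is increasing in $r$ precisely when $2-3\alpha\ge0$, so the supremum over $r$ is the limit at $r=1$. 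You instead push the supremum inside the integral and bound $\int_0^1 S(t)\,dt$, invoking Lemma \ref{Le2.1} per $t$ — this is the Lindstr\"om--Miihkinen--Wikman style argument, and it does work; but your stated worry that one must ``locate the joint maximizer over $\mathbb{D}$'' is unfounded and is the one place your sketch wobbles. Your own factorization already finishes the job: $S(t)$ is the supremum of a product of two nonnegative factors, the first bounded by $2t$ (since $|1-z|\ge1-|z|$) and the second having supremum $\frac{t^{\alpha-1}}{(1-t)^\alpha}$ by Lemma \ref{Le2.1} (for $\tfrac12<\alpha\le\tfrac23$) or by the Schwarz--Pick argument of Lemma \ref{Le2.2} (for $0<\alpha\le\tfrac12$), so $S(t)\le\frac{2t^\alpha}{(1-t)^\alpha}$ by the elementary inequality $\sup(AB)\le(\sup A)(\sup B)$, and integration gives exactly $\frac{2\alpha\pi}{\sin\alpha\pi}$. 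The trade-off: your route reuses the hard analysis already encapsulated in Lemma \ref{Le2.1}, while the paper's monotonicity computation is self-contained and makes the threshold $\tfrac23$ appear transparently as the sign of $2-3\alpha$. One small bookkeeping slip: the $\Phi(r)$ you define from the kernel $(1-tr)^{-2}$ of (\ref{eq5.1}) is a different (not ``equivalent'') majorant from the composition-operator form coming from (\ref{eq5.2}); the factors $(1-(1-t)r)^{2\alpha-1}$ and $(t-1)r+1+t$ you estimate in the $\tfrac23<\alpha<1$ case live in the latter, which is the one the paper (correctly) uses.
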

\begin{proof}
	First, let's consider the lower bound of $\|\mathcal{H}\|_{H_\alpha^\infty\rightarrow\mathcal{B}^{\alpha+1}}$. Let $0<\alpha<1$ and $z\in\mathbb{D}$. Define$$f_\alpha(z)=\frac{1}{(1-z^2)^\alpha}.$$
	On one hand, we have the estimate $$\|f_\alpha\|_{H_\alpha^\infty}=\sup_{z\in\mathbb{D}}\frac{(1-|z|^2)^\alpha}{|1-z^2|^\alpha}\leq\sup_{z\in\mathbb{D}}\frac{(1-|z|^2)^\alpha}{(1-|z|^2)^\alpha}=1.$$
	On the other hand, for $r\in(0,1)$ it holds that $\lim_{r\rightarrow1^-}|f_\alpha(r)|(1-r^2)^\alpha=1$, and we obtain $\|f_\alpha\|_{H_\alpha^\infty}=1$.
	
	According to (\ref{eq2.1}) and (\ref{eq5.2}), we obtain that
	\begin{align*}
		\|\mathcal{H}\|_{H_\alpha^\infty\rightarrow\mathcal{B}^{\alpha+1}}&\geq\frac{\|\mathcal{H}(f_\alpha)\|_{\mathcal{B}^{\alpha+1}}}{\|f_\alpha\|_{H_\alpha^\infty}}\\
		&=|\mathcal{H}f(0)|+\sup_{z\in\mathbb{D}}(1-|z|^2)^{\alpha+1}|(\mathcal{H}f)'(z)|\\
		&=\int_{0}^{1}\frac{1}{(1-t^2)^\alpha}dt+\sup_{z\in\mathbb{D}}(1-|z|^2)^{\alpha+1}\left|\int_{0}^{1}\frac{t}{\left[(t-1)z+1\right](1-z)(1-\phi_t(z)^2)^\alpha}dt\right|\\
		&=\int_{0}^{1}\frac{1}{(1-t^2)^\alpha}dt+\sup_{z\in\mathbb{D}}(1-|z|^2)^{\alpha+1}\left|\int_{0}^{1}\frac{t\left[(t-1)z+1\right]^{2\alpha-1}}{(1-z)^{\alpha+1}(1-t)^\alpha\left[(t-1)z+1+t\right]^\alpha}dt\right|\\
		&\geq\int_{0}^{1}\frac{1}{(1-t^2)^\alpha}dt+\sup_{0\leq r<1}(1-r^2)^{\alpha+1}\left|\int_{0}^{1}\frac{t\left[(t-1)r+1\right]^{2\alpha-1}}{(1-r)^{\alpha+1}(1-t)^\alpha\left[(t-1)r+1+t\right]^\alpha}dt\right|\\
		&\geq\int_{0}^{1}\frac{1}{(1-t^2)^\alpha}dt+\lim\limits_{r\rightarrow1^-}(1-r^2)^{\alpha+1}\left|\int_{0}^{1}\frac{t\left[(t-1)r+1\right]^{2\alpha-1}}{(1-r)^{\alpha+1}(1-t)^\alpha\left[(t-1)r+1+t\right]^\alpha}dt\right|\\
		&=\int_{0}^{1}\frac{1}{(1-t^2)^\alpha}dt+\int_{0}^{1}\frac{2t^\alpha}{(1-t)^\alpha}dt\\
		&=\int_{0}^{1}\frac{1}{(1-t^2)^\alpha}dt+2B(1+\alpha,1-\alpha)\\
		&=\int_{0}^{1}\frac{1}{(1-t^2)^\alpha}dt+\frac{2\Gamma(1+\alpha)\Gamma(1-\alpha)}{\Gamma(2)}\\
		&=\int_{0}^{1}\frac{1}{(1-t^2)^\alpha}dt+\frac{2\alpha\Gamma(\alpha)\Gamma(1-\alpha)}{\Gamma(2)}\\
		&=\int_{0}^{1}\frac{1}{(1-t^2)^\alpha}dt+\frac{2\alpha\pi}{\sin\pi\alpha}.
	\end{align*}
	Therefore, we have obtained the lower bound.

 Now we  consider the upper bound of $\|\mathcal{H}\|_{H_{\alpha}^\infty\rightarrow \mathcal{B}^{\alpha+1}}$. Suppose that $f\in H^\infty_\alpha$, it follows that
	\begin{align*}
		\|\mathcal{H}f\|_{\mathcal{B}^{\alpha+1}}&=|\mathcal{H}f(0)|++\sup_{z\in\mathbb{D}}(1-|z|^2)^{\alpha+1}|(\mathcal{H}f)'(z)|\\
		&=|\int_{0}^{1}f(t)dt|+\sup_{z\in\mathbb{D}}(1-|z|^2)^{\alpha+1}\left|\int_{0}^{1}\frac{t}{\left[(t-1)z+1\right](1-z)}f(\phi_t(z))dt\right|\\
		&=|\int_{0}^{1}\frac{(1-t^2)^\alpha}{(1-t^2)^\alpha}f(t)dt|+\sup_{z\in\mathbb{D}}(1-|z|^2)^{\alpha+1}\left|\int_{0}^{1}\frac{\phi_t(z)}{(1-z)}\frac{(1-\phi^2_t(z))^\alpha}{(1-\phi^2_t(z))^\alpha}f(\phi_t(z))dt\right|\\
		&\leq\int_{0}^{1}\frac{1}{(1-t^2)^\alpha}dt\|f\|_{H_\alpha^\infty}+\sup_{z\in\mathbb{D}}(1-|z|^2)^{\alpha+1}\left|\int_{0}^{1}
\frac{\phi_t(z)}{(1-z)}\frac{1}{(1-\phi^2_t(z))^\alpha}dt\right|\|f\|_{H_\alpha^\infty}\\
		&\leq\int_{0}^{1}\frac{1}{(1-t^2)^\alpha}dt\|f\|_{H_\alpha^\infty}+\sup_{z\in\mathbb{D}}(1-|z|^2)^{\alpha+1}\int_{0}^{1}
\frac{|\phi_t(z)|}{|1-z|}\frac{1}{|1-\phi^2_t(z)|^\alpha}dt\|f\|_{H_\alpha^\infty}.
	\end{align*}
	Noting that  $|\phi_t(z)|\leq\phi_t(|z|)$, we can obtain that
	\begin{align*}
		\|\mathcal{H}\|_{H_\alpha^\infty\rightarrow\mathcal{B}^{\alpha+1}}
		&\leq\int_{0}^{1}\frac{1}{(1-t^2)^\alpha}dt+\sup_{z\in\mathbb{D}}(1-|z|^2)^{\alpha+1}\int_{0}^{1}\frac{\phi_t(|z|)}{|1-z|}\frac{1}{(1-\phi^2_t(|z|))^\alpha}dt\\
		&=\int_{0}^{1}\frac{1}{(1-t^2)^\alpha}dt+\sup_{0\leq r<1}(1+r)^{\alpha+1}\int_{0}^{1}\frac{t\left[(t-1)r+1\right]^{2\alpha-1}}{(1-t)^{\alpha}\left[(t-1)r+1+t\right]^\alpha}dt.
	\end{align*}
 Changing the variable of integration, we obtain that
	$$\|\mathcal{H}\|_{H_\alpha^\infty\rightarrow\mathcal{B}^{\alpha+1}}
	\leq\int_{0}^{1}\frac{1}{(1-t^2)^\alpha}dt+\sup_{0\leq r<1}(1+r)^{\alpha+1}\int_{0}^{1}\frac{(1-t)(1-rt)^{2\alpha-1}}{t^{\alpha}\left[2-(1+r)t\right]^\alpha}dt.$$
	
	For $0<\alpha\leq\frac{2}{3}$, let $g(r,t)=\frac{(1-rt)^{2\alpha-1}}{\left(2-(1+r)t\right)^\alpha}$, and we have that
	\begin{align*}
		\frac{\partial g(r,t)}{\partial r}&=\frac{t(1-rt)^{2\alpha-2}}{\left(2-(1+r)t\right)^{\alpha+1}}\left[(1-\alpha)(1-rt)+(1-2\alpha)(1-t)\right]\\
		&\geq\frac{t(1-rt)^{2\alpha-2}}{\left(2-(1+r)t\right)^{\alpha+1}}\left[(1-\alpha)(1-t)+(1-2\alpha)(1-t)\right]\\
		&=\frac{t(1-rt)^{2\alpha-2}}{\left(2-(1+r)t\right)^{\alpha+1}}(2-3\alpha)(1-t)\\
		&\geq 0.
	\end{align*}
	Therefore, $g$ is monotonically increasing on $r\in(0,1)$. It follows that
	\begin{align*}
		\|\mathcal{H}\|_{H_\alpha^\infty\rightarrow\mathcal{B}^{\alpha+1}}
		&\leq\int_{0}^{1}\frac{1}{(1-t^2)^\alpha}dt+\sup_{0\leq r<1}(1+r)^{\alpha+1}\int_{0}^{1}\frac{(1-t)(1-rt)^{2\alpha-1}}{t^{\alpha}\left[2-(1+r)t\right]^\alpha}dt\\
		&=\int_{0}^{1}\frac{1}{(1-t^2)^\alpha}dt+\lim\limits_{r\rightarrow1^-}(1+r)^{\alpha+1}\int_{0}^{1}\frac{(1-t)(1-rt)^{2\alpha-1}}{t^{\alpha}\left[2-(1+r)t\right]^\alpha}dt\\
		&=\int_{0}^{1}\frac{1}{(1-t^2)^\alpha}dt+2\int_{0}^{1}\frac{(1-t)^\alpha}{t^\alpha}dt\\
		&=\int_{0}^{1}\frac{1}{(1-t^2)^\alpha}dt+2B(1+\alpha,1-\alpha)\\
		&=\int_{0}^{1}\frac{1}{(1-t^2)^\alpha}dt+\frac{2\alpha\pi}{\sin\pi\alpha}.
	\end{align*}
Thus, for $0 <\alpha\leq\frac{2}{3}$,
	$$\|\mathcal{H}\|_{H_\alpha^\infty\rightarrow\mathcal{B}^{\alpha+1}}=\int_{0}^{1}\frac{1}{(1-t^2)^\alpha}dt+\frac{2\alpha\pi}{\sin\alpha\pi}.$$
	For $\frac{2}{3}<\alpha<1$, we have that
	\begin{align*}
		\|\mathcal{H}\|_{H_\alpha^\infty\rightarrow\mathcal{B}^{\alpha+1}}
		&\leq\int_{0}^{1}\frac{1}{(1-t^2)^\alpha}dt+\sup_{0\leq r<1}(1+r)^{\alpha+1}\int_{0}^{1}\frac{(1-t)(1-rt)^{2\alpha-1}}{t^{\alpha}\left[2-(1+r)t\right]^\alpha}dt\\
		&\leq\int_{0}^{1}\frac{1}{(1-t^2)^\alpha}dt+2\int_{0}^{1}\frac{(1-t)^{1-\alpha}}{t^\alpha}dt.
	\end{align*}
From the above proof, we know that for $\alpha\geq1$, we define $f_\alpha(z)=\frac{1}{(1-z^2)^\alpha}$, and $f_\alpha\in H_\alpha^\infty$.  Then $\mathcal{H}f(0)$ is not defined. Therefore, $\mathcal{H}$ cannot map $H_\alpha^\infty$ to $\mathcal{B}^{\alpha+1}$.
	This finishes the proof of the theorem.
\end{proof}

\end{document}